\definecolor{lightblue}{rgb}{0.22,0.45,0.70}
\newtheorem{lemma}{Lemma}[section]
\newtheorem{theorem}{Theorem}[section]
\newtheorem{proposition}{Proposition}[section]
\newtheorem{corollary}{Corollary}[section]
\newtheorem{remark}{Remark}[section]
\newcommand{\vertiii}[1]{{\left\vert\kern-0.25ex\left\vert\kern-0.25ex\left\vert #1 
    \right\vert\kern-0.25ex\right\vert\kern-0.25ex\right\vert}}
\newcommand{\vdiv}{\operatorname*{div}}
\newcommand{\bu}{\boldsymbol{u}}
\newcommand{\bv}{\boldsymbol{v}}
\newcommand\bH{\mathbf{H}}
\newcommand{\bV}{\mathbf{V}}
\newcommand{\TT}{\mathbb{T}}
\newcommand{\NN}{\mathbb{N}}
\newcommand\cR{\mathcal{R}}
\newcommand\cT{\mathcal{T}}
\newcommand\cX{\mathcal{X}}
\newcommand\cTh{\widehat{\mathcal{T}}}
\numberwithin{equation}{section}
\numberwithin{table}{section}
\numberwithin{figure}{section}
\newcommand\todo[1]{\begin{color}{black}#1\end{color}}
\begin{document}
\title[AFEM for the Stokes Eigenvalue problem]{Adaptive Mixed FEM for the Stokes eigenvalue problem}
\author[D. Boffi]{Daniele Boffi}
\address{Computer, electrical and mathematical sciences and engineering division, King
Abdullah University of Science and Technology, Thuwal 23955, Saudi Arabia and
Dipartimento di Matematica ``F. Casorati'', University of Pavia, Via Ferrata 1, 27100, Pavia, Italy.}
\email{daniele.boffi@kaust.edu.sa}
\thanks{DB is member of the INdAM Research group GNCS.}
\author[A. Khan]{Arbaz Khan}
\address{Department of Mathematics, Indian Institute of Technology Roorkee, Roorkee 247667, India.}
\email{arbaz@ma.iitr.ac.in}
\thanks{Advancements in this project occurred when AK visited DB at Kaust, Saudi Arabia from October 13 to 24, 2022.
The funding for this visit was provided by DB, Kaust, Saudi Arabia. 
 AK received partial support from the Sponsored Research \& Industrial Consultancy (SRIC), Indian Institute of Technology Roorkee, India, through the faculty initiation grant MTD/FIG/100878, as well as from the SERB MATRICS grant MTR/2020/000303 and the SERB Core research grant CRG/2021/002569.}
\date{\today}
\subjclass[2020]{65N30, 65N25, 65N50, 76D07.}
\begin{abstract}
In this paper we discuss the optimal convergence of a standard adaptive scheme
based on mixed finite element approximation to the solution of the eigenvalue
problem associated with the Stokes equations. 
The proofs of the quasi-orthogonality and the discrete reliability are presented. 
Our numerical experiments confirm the efficacy of the proposed adaptive scheme.
\end{abstract}
\maketitle


\section{Introduction}
Over the last few decades, the mathematical understanding of adaptive finite
element methods (AFEM) has reached its maturity and adaptive schemes are more
and more popular for the approximation of the solutions to partial
differential equations (PDEs). Their success is based on the possibility of
computing the approximated solutions of PDEs accurately with minimum number of
degree of freedoms (DOFs).
A rich literature confirms the huge impact of these methods in real life
applications.
We recall the milestone
works~\cite{ainsworth1997posteriori,verfurth2013posteriori} for the
development and analysis of a posteriori estimators which are key ingredients of
adaptive algorithms. 

In fluid mechanics, eigenvalue problems play a crucial role in understanding
the behavior of fluid flows and their stability.
Analyzing the eigenvalues provides valuable insights into the critical
conditions for stability or instability in fluid flows, aiding in the design
and prediction of fluid systems in various engineering applications.
In this context, the numerical study of the Stokes eigenvalue problem is of
great importance. 
Adaptive methods are designed in order to dynamically adjusts the
computational resources, based on the evolving solution characteristics.
Unlike static methods, adaptive techniques refine their discretization and
mesh adaptively during the computation, focusing computational efforts in
regions where the solution exhibits significant variations. This adaptability
allows for a more efficient and accurate determination of the solutions,
particularly when dealing with complex geometries or varying physical
parameters. By intelligently allocating computational resources, adaptive
methods enhance both the precision and the computational efficiency.

The a priori analysis of the Stokes problem is a classical topic within the
theory of mixed finite elements~\cite{bbf}. On the other hand, the study of
AFEM for the Stokes problem has been an open problem since the recent work
presented in~\cite{feischl2019optimality}.

In the literature, few results based on a posteriori error estimate for the
Stokes eigenvalue problem are available. In~\cite{lovadina2009posteriori}, it
is considered a residual based a posteriori error estimator for the Stokes
eigenvalue problem using stable approximations.
In~\cite{liu2013superconvergence}, it is presented a recovery type a
posteriori error estimator for the Stokes eigenvalue problems based on
projection method using some superconvergence results.
Also~\cite{han2015new, huang2015lower} deal with residual type a posteriori
error estimators. In~\cite{gedicke2018arnold}, a priori and a posteriori error
estimates are studied, for the stress velocity formulation of the Stokes
eigenvalue problem using Arnold--Winther mixed finite elements.  A posteriori error
estimates for divergence-conforming discontinuous Galerkin finite elements of
the Stokes eigenvalue problem is discussed in~\cite{gedicke2020divergence}. 
 
The results of the present paper extend to eigenvalue problems the seminal
contribution of Feischl who recently proved the optimality of a standard
adaptive finite element method for the Stokes problem
in~\cite{feischl2019optimality}. 
One of the main contributions of~\cite{feischl2019optimality} is the proof of
the quasi-orthogonality property. Specifically, it is established a relation
between the quasi-orthogonality property and the $LU$-factorization of
infinite matrices. Some  optimality results for adaptive mesh refinement
algorithms for non-symmetric, indefinite, and time-dependent problems are
discussed in~\cite{feischl2022inf}. More specifically, Feischl overcame one
technicality in the proof of the quasi-orthogonality
of~\cite{feischl2019optimality} and presented the proof of optimality.

{As far as the authors are aware, there is currently no optimality result available in 
the existing literature for a standard adaptive finite element method applied to the Stokes eigenvalue problems.} 
In this paper, we introduce
a suitable estimator for the Stokes eigenvalue problem which is locally
equivalent to the a posteriori error estimator proposed
in~\cite{lovadina2009posteriori} and we prove the optimality of the standard
adaptive finite element method stemming from this estimator.
Specifically, we establish the four key properties discussed in
\cite{carstensen2014axioms} to ensure the  rate optimality of adaptive finite
element methods. In particular, we prove the quasi-orthogonality property for
the numerical approximation of Stokes eigenvalue problem using the
$LU$-factorization of matrices discussed in~\cite{feischl2022inf}.

The rest of the paper is organised as follows:
Section \ref{sec1} is devoted to the function spaces and problem formulation. 
Error estimator and adaptive method will be presented in  Section \ref{sec3}. 
Proof of convergence and rate optimality are discussed in Section \ref{se:proofs}.
Finally, computational results are demonstrated in the Section \ref{sec5}.
\section{The Stokes Eigenvalue problem}\label{sec1}
Let $\Omega$ be an open, bounded, and connected domain with Lipschitz boundary
$\partial\Omega$. For simplicity, we assume that $\Omega$ is a polygon in two
dimensions or a polyhedron in three dimensions.
Let $H^s(\Omega)$ be the standard Sobolev space for $s\ge 0$. We denote the
scalar product in $L^2(\Omega)=H^0(\Omega)$ as $(\cdot,\cdot)$. If $\omega$ is
a subset of $\Omega$ then $(\cdot,\cdot)_\omega$ is the restriction of the
scalar product to $\omega$.
The following function spaces will be used to write our weak formulation:
\begin{align*}
\bV&=\bH^1_0(\Omega)=\{\bu\in \bH^{1}(\Omega):\bu|_{\partial\Omega}=\textbf{0}\}\\
Q&=L^2_0(\Omega)=\left\{p\in L^2(\Omega):\int_{\Omega} p=0\right\}.
\end{align*}
We consider the velocity-pressure Stokes eigenvalue problem: Find an eigenpair $(\bu,p,\lambda)$ with $\bu\neq \textbf{0}$ such that
\begin{subequations}\label{stokeseig}
\begin{align}
-\Delta \bu+\nabla p &=\lambda \bu\quad\;\mbox{in} \;\Omega\\
\vdiv \bu&=0 \qquad\mbox{in} \;\Omega\\
\bu&=0\qquad\mbox{on} \;\partial\Omega.
\end{align}
\end{subequations}
The standard weak formulation of the Stokes eigenvalue
problem~\eqref{stokeseig} is obtained as follows: Find $(\bu,p,\lambda)\in \bV\times
Q\times \mathbb{R}^{+}$ with $||\bu||_{0}=1$ such that
\begin{equation}\label{conweakstokeseig}
\aligned
&a(\bu,\bv)+b(\bv,p)=\lambda(\bu,\bv)&&\forall\bv\in\bV\\
&b(\bu,q)=0&&\forall q\in Q,
\endaligned
\end{equation}
where $a(\bu,\bv)=(\nabla \bu,\nabla\bv)$ and $b(\bv,p)=-(\rm{div} \,\bv,p)$.
Throughout the paper, we will use the following graph norm:
\begin{align*}
|||(\bu,p)|||^2:= ||\nabla \bu||_0^2+||p||_0^2.
\end{align*}

\subsection{Discrete formulation of the Stokes eigenvalue problem}\label{sec2}
Let $\cT$ be a shape-regular triangulation of $\Omega$. We denote by $T\in\cT$
one element of the triangulation, by $h_T$ its diameter and by $h_\cT$
the meshsize of $\cT$ (maximum element diameter). We will also need the set
$\mathcal{E}$ of all edges of $\cT$ and their length $h_e$. When no confusion
arises, $h$ will be used in place of $h_\cT$.

We use the standard
generalized Taylor--Hood finite element scheme~\cite{bbf} that is defined as
$\bV_\cT^p\times Q_{\cT}^{p-1}$ with $p\ge 2$, where
\begin{align*}
P^{p}(\cT)&:=\{v\in H^1(\Omega): v|_T \mbox{ is a polynomial with degree} \le
p,\ T\in\cT\}\\
\bV_\cT^p&:= (P^{p}(\cT))^2\cap (H^1_0(\Omega))^2,\\
Q_{\cT}^{p-1}&:= P^{p-1}(\cT)\cap L^2_0(\Omega).
\end{align*}
The discrete weak formulation of the Stokes eigenvalue problem is defined as
follows: Find $(\bu_\cT,p_\cT,\lambda_\cT)\in \bV_{\cT}^p\times
Q_{\cT}^{p-1}\times\mathbb{R}^{+}$ with $||\bu_\cT||_{0}=1$ such that
\begin{equation}\label{disform11}
\aligned
&a(\bu_{\cT},\bv)+b(\bv,p_\cT)=\lambda_{\cT}(\bu_\cT,\bv)&&\forall\bv\in\bV_\cT^p\\
&b(\bu_\cT,q)=0&&\forall q\in Q_{\cT}^{p-1}.
\endaligned
\end{equation}
\begin{remark}
{In} the abstract theory of eigenvalue problems in mixed form, the problem is
usually written in terms of an eigenspace corresponding only to the velocity
space, see~\cite{bbg2,bbf}. This is convenient in order to analyze schemes
that do not satisfy the inf-sup condition, thus allowing for a non-uniqueness
of the pressure space. In our case, we are choosing an inf-sup stable scheme
so that we decided to use a formulation of the problem where the eigenspace
is associated with both the velocity and the pressure.
\end{remark}

It is well-known that the Taylor--Hood element is inf-sup stable {with
the mild assumption on the mesh discussed in~\cite{bbf}}
and that its generalization to higher degrees is stable as
well~\cite{boffi1994stability,boffi1997three, bbf}{.} %
{Specifically, in two dimensions we assume that the mesh $\cT$ contains
at least three elements and in three dimensions that each element $T\in\cT$
has three non coplanar edges not lying on the boundary $\partial \Omega$
(which is true, for instance, if each element has an internal vertex).}
In the sequel, we make the notation shorter by introducing the space
\[
\cX_{\cT}^p=\bV_{\cT}^p\times Q_{\cT}^{p-1}
\]
and the corresponding notation $u_{\cT}=(\bu_{\cT},p_{\cT})\in\cX_{\cT}^p$
together with
\[
|||u_{\cT}|||=|||(\bu_{\cT},p_{\cT})|||.
\]
Given two subspaces $A$ and $B$ of the space $\cX_\cT^p$ we shall use the gap
defined as
\[
\delta(A,B)=\sup_{u\in A}\inf_{v\in B}|||u-v|||.
\]

The stability of the numerical scheme implies then the following result.
\begin{lemma}\label{stab11}
For any $u_{\cT}\in\cX_{\cT}^p$ there exists $v_{\cT}\in\cX^p_\cT$ with
$|||v_{\cT}|||\le C_1 |||u_{\cT}|||$
such that
\begin{align}\label{eq24}
C_2|||u_{\cT}|||^2\le B(u_{\cT},v_{\cT})
\end{align}
where $ B(u_{\cT},v_\cT)=a(\bu_{\cT},\bv_{\cT}){+}b(\bv_{\cT},p_\cT){+}b(\bu_\cT,q_{\cT})$ and, $C_1$ and $C_2$ are positive constants independent of $h_{\cT}$.
\end{lemma}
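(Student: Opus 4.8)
The plan is to establish the desired inf-sup-type stability bound by combining coercivity of $a(\cdot,\cdot)$ on the full velocity space with the discrete inf-sup condition for the Taylor--Hood pair. Given $u_\cT=(\bu_\cT,p_\cT)\in\cX_\cT^p$, I would first exploit the inf-sup stability recalled just above the lemma: there exists $\bw_\cT\in\bV_\cT^p$ with $\|\nabla\bw_\cT\|_0\le C\|p_\cT\|_0$ such that $b(\bw_\cT,p_\cT)\le -\beta\|p_\cT\|_0^2 + (\text{terms controlled by }\|\nabla\bu_\cT\|_0)$; more precisely one picks $\bw_\cT$ realizing $b(\bw_\cT,p_\cT)=\|p_\cT\|_0^2$ up to a constant, normalized so that $\|\nabla\bw_\cT\|_0\lesssim\|p_\cT\|_0$. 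Then I would set the test function to be $v_\cT=(\bu_\cT+\delta\bw_\cT,\,-p_\cT)$ for a small parameter $\delta>0$ to be fixed.

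The key computation is to expand
\[
B(u_\cT,v_\cT)=a(\bu_\cT,\bu_\cT+\delta\bw_\cT)+b(\bu_\cT+\delta\bw_\cT,p_\cT)+b(\bu_\cT,-p_\cT).
\]
The two $b(\bu_\cT,\cdot)$ terms involving $p_\cT$ cancel (the $b(\bu_\cT,p_\cT)$ from the second slot and the $-b(\bu_\cT,p_\cT)=b(\bu_\cT,-p_\cT)$ from the third), leaving
\[
B(u_\cT,v_\cT)=\|\nabla\bu_\cT\|_0^2+\delta\,a(\bu_\cT,\bw_\cT)+\delta\,b(\bw_\cT,p_\cT).
\]
Using the $\mathrm{H}^1$-coercivity of $a$ (here $a(\bu_\cT,\bu_\cT)=\|\nabla\bu_\cT\|_0^2$, which already is the graph-norm contribution for the velocity by Poincaré/Friedrichs being unnecessary since we use $\|\nabla\bu\|_0$ in $|||\cdot|||$), the inf-sup choice $b(\bw_\cT,p_\cT)\gtrsim\|p_\cT\|_0^2$ (up to sign conventions, after possibly replacing $\bw_\cT$ by $-\bw_\cT$), and Young's inequality on the cross term $\delta\,a(\bu_\cT,\bw_\cT)\le\delta\|\nabla\bu_\cT\|_0\|\nabla\bw_\cT\|_0\le\tfrac12\|\nabla\bu_\cT\|_0^2+C\delta^2\|\nabla\bw_\cT\|_0^2\le\tfrac12\|\nabla\bu_\cT\|_0^2+C'\delta^2\|p_\cT\|_0^2$, one obtains, after choosing $\delta$ small enough that $C'\delta^2$ is dominated by the $\delta\beta$ coming from the inf-sup term,
\[
B(u_\cT,v_\cT)\ge \tfrac12\|\nabla\bu_\cT\|_0^2+c\,\delta\,\|p_\cT\|_0^2\ge C_2\,|||u_\cT|||^2.
\]
Finally, the norm bound $|||v_\cT|||\le C_1|||u_\cT|||$ follows from $\|\nabla(\bu_\cT+\delta\bw_\cT)\|_0\le\|\nabla\bu_\cT\|_0+\delta\|\nabla\bw_\cT\|_0\lesssim\|\nabla\bu_\cT\|_0+\|p_\cT\|_0$ together with $\|-p_\cT\|_0=\|p_\cT\|_0$.

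The main obstacle is essentially bookkeeping rather than conceptual: one must track the sign conventions in $b(\bv,p)=-(\vdiv\bv,p)$ carefully so that the inf-sup test function $\bw_\cT$ contributes with the correct (negative-definite-after-sign-fix) sign, and one must verify that the constants $C_1,C_2$ produced this way genuinely depend only on the inf-sup constant $\beta$ and the Poincaré/coercivity constant — hence on the (mild) mesh assumption stated above but not on $h_\cT$. Since the generalized Taylor--Hood inf-sup constant is uniform in $h_\cT$ under the stated mesh hypotheses (by \cite{boffi1994stability,boffi1997three,bbf}), this uniformity carries over to $C_1$ and $C_2$, completing the argument.
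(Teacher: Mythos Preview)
Your argument is correct and is exactly the standard construction that the paper's one-line proof (``Employing the inf-sup condition in conjunction with the definition of the bilinear form $B(\cdot,\cdot)$ yields the stated outcome'') is alluding to. You have simply spelled out the usual Fortin-type test function $v_\cT=(\bu_\cT+\delta\bw_\cT,-p_\cT)$ and the Young-inequality balancing that turn the discrete inf-sup constant into the coercivity constant $C_2$; nothing further is needed.
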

\begin{proof}
Employing the inf-sup condition in conjunction with the definition of the bilinear form $B(\cdot,\cdot)$ yields the stated outcome.
\end{proof}
\begin{lemma}There holds:
\begin{align}\label{cont11}
 B(u_{\cT},v_{\cT})\le C_b|||u_{\cT}|||\,|||v_{\cT}|||\quad \forall u_{\cT},v_{\cT}\in \cX_{\cT}^p.
 \end{align}
\end{lemma}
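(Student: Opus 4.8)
The plan is to bound the three terms in the definition of $B(\cdot,\cdot)$ separately by the Cauchy--Schwarz inequality and then recombine them with the help of the graph norm $|||\cdot|||$. Writing $u_\cT=(\bu_\cT,p_\cT)$ and $v_\cT=(\bv_\cT,q_\cT)$, recall
\[
B(u_\cT,v_\cT)=a(\bu_\cT,\bv_\cT)+b(\bv_\cT,p_\cT)+b(\bu_\cT,q_\cT)=(\nabla\bu_\cT,\nabla\bv_\cT)-(\vdiv\bv_\cT,p_\cT)-(\vdiv\bu_\cT,q_\cT).
\]
For the first term the Cauchy--Schwarz inequality in $L^2(\Omega)$ gives $(\nabla\bu_\cT,\nabla\bv_\cT)\le\|\nabla\bu_\cT\|_0\,\|\nabla\bv_\cT\|_0$. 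For the two terms involving $b(\cdot,\cdot)$ I would first record the elementary bound $\|\vdiv\bw\|_0\le\sqrt{d}\,\|\nabla\bw\|_0$ ($d=2$ or $3$), which holds because each component of $\vdiv\bw$ is a sum of $d$ entries of $\nabla\bw$; combined with Cauchy--Schwarz this yields $|b(\bv_\cT,p_\cT)|\le\sqrt{d}\,\|\nabla\bv_\cT\|_0\,\|p_\cT\|_0$ and $|b(\bu_\cT,q_\cT)|\le\sqrt{d}\,\|\nabla\bu_\cT\|_0\,\|q_\cT\|_0$.

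Collecting the three estimates and applying a discrete Cauchy--Schwarz inequality to the resulting bilinear expression in the vectors $(\|\nabla\bu_\cT\|_0,\|p_\cT\|_0)$ and $(\|\nabla\bv_\cT\|_0,\|q_\cT\|_0)$ gives
\[
B(u_\cT,v_\cT)\le C_b\bigl(\|\nabla\bu_\cT\|_0^2+\|p_\cT\|_0^2\bigr)^{1/2}\bigl(\|\nabla\bv_\cT\|_0^2+\|q_\cT\|_0^2\bigr)^{1/2}=C_b\,|||u_\cT|||\,|||v_\cT|||,
\]
with $C_b$ depending only on $\sqrt{d}$ (for instance $C_b=1+\sqrt{d}$ works). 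Since the argument uses no property whatsoever of the discrete spaces, the very same bound holds on all of $\bV\times Q$, and in particular $C_b$ is independent of the mesh $\cT$ and of $p$.

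I do not expect any genuine obstacle: the statement is a direct consequence of Cauchy--Schwarz together with the continuity of the divergence operator from $\bH^1_0(\Omega)$ into $L^2_0(\Omega)$. The only point that needs a (trivial) check is the constant in $\|\vdiv\bw\|_0\le\sqrt{d}\,\|\nabla\bw\|_0$, and any finite dimension-dependent constant is enough, since all we require is $C_b<\infty$.
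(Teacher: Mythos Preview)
Your proposal is correct and follows essentially the same approach as the paper, which simply states that an application of Cauchy--Schwarz implies the result. You have merely spelled out the details (the bound $\|\vdiv\bw\|_0\le\sqrt{d}\,\|\nabla\bw\|_0$ and the discrete Cauchy--Schwarz recombination) that the paper leaves implicit.
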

\begin{proof}
An application of Cauchy-Schwarz implies the stated result.
\end{proof}
\begin{lemma}
There holds:
\begin{align}\label{l2h1ineq}
||\bu-\bu_\cT||_{0}\le \rho(h_\cT) |||u-u_{\cT}|||,
\end{align}
where $\rho(h_{\cT})$ tends to zero as $h_{\cT}$ tends to zero.
\end{lemma}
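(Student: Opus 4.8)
The plan is to establish the $L^2$--$H^1$ estimate \eqref{l2h1ineq} by a standard Aubin--Nitsche (duality) argument adapted to the mixed setting, combined with the a priori convergence of the eigenvalue approximation. First I would recall that, for an inf-sup stable mixed discretization of the Stokes source problem, the Galerkin solution operator $T_\cT$ converges to the continuous solution operator $T$ in the operator norm $\mathcal{L}(L^2,L^2)$; this is the compactness/collective-compactness ingredient that drives eigenvalue convergence (see \cite{bbf,bbg2}). Consequently $\lambda_\cT\to\lambda$ and, up to the choice of sign/normalization of $\bu_\cT$, we have $|||u-u_\cT|||\to 0$ as $h_\cT\to 0$. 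In particular $\|\bu-\bu_\cT\|_0\to 0$ as well.

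Next I would set up the duality problem: given $\bu-\bu_\cT\in L^2(\Omega)$, let $(\bw,r)\in\bV\times Q$ solve the Stokes source problem with right-hand side $\bu-\bu_\cT$, i.e. $a(\bv,\bw)+b(\bv,r)=(\bu-\bu_\cT,\bv)$ for all $\bv\in\bV$ and $b(\bw,q)=0$ for all $q\in Q$. Elliptic regularity on the polygon/polyhedron $\Omega$ gives $(\bw,r)\in \bH^{1+s}(\Omega)\times H^s(\Omega)$ for some $s\in(0,1]$ with $\|\bw\|_{1+s}+\|r\|_s\le C\|\bu-\bu_\cT\|_0$. Testing this dual problem with $\bv=\bu-\bu_\cT$ and using $b(\bu-\bu_\cT,q)\to0$ appropriately, together with the consistency relations satisfied by $(\bu,p,\lambda)$ and $(\bu_\cT,p_\cT,\lambda_\cT)$, I would obtain
\begin{align*}
\|\bu-\bu_\cT\|_0^2 = a(\bu-\bu_\cT,\bw)+b(\bu-\bu_\cT,r)+(\text{pressure terms}) + \big(\lambda(\bu,\cdot)-\lambda_\cT(\bu_\cT,\cdot)\big)\text{-contribution}.
\end{align*}
The Galerkin orthogonality (in the mixed form of \eqref{disform11} versus \eqref{conweakstokeseig}) lets me subtract an arbitrary discrete pair $(\bw_\cT,r_\cT)\in\cX_\cT^p$ from $(\bw,r)$; choosing $(\bw_\cT,r_\cT)$ to be a quasi-interpolant and using the approximation estimate $\inf_{(\bw_\cT,r_\cT)}|||(\bw-\bw_\cT,r-r_\cT)|||\le C h_\cT^{s}\big(\|\bw\|_{1+s}+\|r\|_s\big)$ yields a factor $h_\cT^s$. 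The eigenvalue-defect terms contribute a factor of the form $|\lambda-\lambda_\cT| + \|\bu-\bu_\cT\|_0$, which are each $o(1)$; after absorbing the resulting $\|\bu-\bu_\cT\|_0$ into the left-hand side one is left with $\|\bu-\bu_\cT\|_0\le \big(C h_\cT^{s} + o(1)\big)\,|||u-u_\cT|||$. Setting $\rho(h_\cT)$ equal to this prefactor gives the claim, with $\rho(h_\cT)\to0$.

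The main obstacle I anticipate is the careful bookkeeping of the right-hand side terms coming from the eigenvalue problem rather than a fixed source problem: one must control $\lambda(\bu,\bv)-\lambda_\cT(\bu_\cT,\bv)$ when $\bv$ is the dual solution (or its interpolant), splitting it as $\lambda(\bu-\bu_\cT,\bv)+(\lambda-\lambda_\cT)(\bu_\cT,\bv)$ and bounding each piece using, respectively, the already-established (but only $o(1)$, not $O(h^s)$) convergence $\|\bu-\bu_\cT\|_0\to0$ and the eigenvalue convergence rate. This is why the bound is stated only with an abstract $\rho(h_\cT)\to0$ rather than an explicit power of $h_\cT$: without a spectral-gap or higher-regularity assumption on the eigenfunction, the duality argument delivers convergence of the $L^2$-to-graph-norm ratio but not necessarily a clean rate. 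A secondary technical point is the normalization: since $\|\bu\|_0=\|\bu_\cT\|_0=1$, one should fix the sign of $\bu_\cT$ so that $(\bu,\bu_\cT)>0$, ensuring $\|\bu-\bu_\cT\|_0$ is genuinely small rather than close to $2$; this is standard and I would state it explicitly at the start.
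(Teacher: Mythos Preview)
Your proposal is correct and follows essentially the same route as the paper: the paper's proof simply cites Verf\"urth's $L^2$ error estimate for the Stokes source problem (which is precisely the Aubin--Nitsche duality argument you sketch) together with the eigenvalue-approximation machinery of Babu\v{s}ka--Osborn and Boffi to absorb the $\lambda\bu-\lambda_\cT\bu_\cT$ defect terms. What you have written is a faithful unpacking of those references, including the normalization remark and the reason why only an abstract $\rho(h_\cT)\to0$ (rather than an explicit rate) is claimed.
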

\begin{proof}
Applying the $L^{2}$ error estimate derived in {\cite[Eq. 4.2]{verfurth1984error}} in conjunction with eigenvalue findings \todo{(elliptic eigenvalue problem)} from {\cite[Lemma 3.4]{babuvska1989finite}} and \cite[\todo{Section 13}]{boffi2010finite} results in the presented conclusion.
\end{proof}
\begin{theorem}[Identity I]\label{algide}
Let $(\bu,p,\lambda)\in \bV\times Q\times \mathbb{R}^{+}$ and
$(\bu_\cT,p_\cT,\lambda_\cT)\in \bV_\cT\times Q_\cT\times \mathbb{R}^{+}$ be
solutions of the weak formulations~\eqref{conweakstokeseig}
and~\eqref{disform11}, respectively. Then the following identity holds: 
\[
\aligned
&\lambda_\cT -\lambda
= a(\bu-\bu_{\cT},\bu-\bu_{\cT}){+}\todo{2}b(\bu-\bu_{\cT},p-p_{\cT})-\lambda||\bu-\bu_{\cT}||_0^2.
\endaligned
\]
\end{theorem}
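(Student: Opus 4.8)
The plan is to derive this identity purely algebraically from the two weak formulations~\eqref{conweakstokeseig} and~\eqref{disform11}, by inserting well-chosen test functions. The only structural point to keep in mind is that $\bu_\cT\in\bV_\cT^p\subset\bV$ and $p_\cT\in Q_\cT^{p-1}\subset Q$ are admissible test objects in the \emph{continuous} problem, whereas $\bu$ and $p$ are generally \emph{not} admissible in the discrete one. First I would record the two normalization identities
\[
a(\bu,\bu)=\lambda,\qquad a(\bu_\cT,\bu_\cT)=\lambda_\cT.
\]
The first follows by taking $\bv=\bu$ in the first equation of~\eqref{conweakstokeseig}, using $b(\bu,p)=0$ (the second equation of~\eqref{conweakstokeseig} with $q=p$) together with $\|\bu\|_0=1$; the second is obtained in exactly the same way from~\eqref{disform11} with $\bv=\bu_\cT$, $q=p_\cT$, and $\|\bu_\cT\|_0=1$.

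Next I would expand the right-hand side by bilinearity, term by term. The symmetric term gives $a(\bu-\bu_\cT,\bu-\bu_\cT)=\lambda+\lambda_\cT-2a(\bu,\bu_\cT)$ by the two normalization identities. For the mixed term, $b(\bu,p)=0$ and $b(\bu,p_\cT)=0$ both follow from the second equation of~\eqref{conweakstokeseig} (the latter precisely because $p_\cT\in Q$), while $b(\bu_\cT,p_\cT)=0$ was already used above; hence $2b(\bu-\bu_\cT,p-p_\cT)=-2b(\bu_\cT,p)$. Note that $b(\bu_\cT,p)$ need \emph{not} vanish, since $p$ does not belong to the discrete pressure space $Q_\cT^{p-1}$. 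Finally, the zeroth-order term gives $-\lambda\|\bu-\bu_\cT\|_0^2=-2\lambda+2\lambda(\bu,\bu_\cT)$, again using both normalizations.

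Collecting the three contributions, the right-hand side reduces to
\[
\lambda_\cT-\lambda-2\bigl(a(\bu,\bu_\cT)+b(\bu_\cT,p)-\lambda(\bu,\bu_\cT)\bigr),
\]
and the bracketed quantity is exactly the first equation of~\eqref{conweakstokeseig} tested against $\bv=\bu_\cT$, hence identically zero. This delivers the stated identity. There is no genuine difficulty here: the computation is elementary bookkeeping. The single subtlety, and the only place an error could creep in, is the asymmetry between the continuous and discrete spaces — one must test the continuous momentum equation against $\bu_\cT$ (rather than attempting to test the discrete problem against $\bu$), and one must not expect the off-diagonal term $b(\bu_\cT,p)$ to cancel on its own; it disappears only after invoking the continuous equation with $\bv=\bu_\cT$.
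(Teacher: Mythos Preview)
Your proof is correct and follows essentially the same route as the paper: both expand the right-hand side by bilinearity, use the normalizations $\|\bu\|_0=\|\bu_\cT\|_0=1$ together with $b(\bu,p)=b(\bu,p_\cT)=b(\bu_\cT,p_\cT)=0$, and then invoke the continuous momentum equation with the discrete test function $\bv=\bu_\cT$ to kill the remaining cross term. Your write-up is in fact a bit more explicit than the paper's about \emph{which} weak formulation is being tested with \emph{which} function, and your remark on the continuous/discrete asymmetry (in particular that $b(\bu_\cT,p)$ need not vanish on its own) is a useful clarification.
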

\begin{proof}
Note that 
\[
\aligned
&a(\bu-\bu_{\cT},\bu-\bu_{\cT}){+}\todo{2}b(\bu-\bu_{\cT},p-p_{\cT}) \\
&\quad=a(\bu,\bu){+}\todo{2}b(\bu,p)+a(\bu_{\cT},\bu_{\cT}){+}\todo{2}b(\bu_{\cT},p_{\cT})
-2(a(\bu,\bu_{\cT}){+}b(\bu,p_{\cT}){+}b(\bu_{\cT},p))\\
&\quad=\lambda(\bu,\bu)+\lambda_\cT(\bu_\cT,\bu_\cT)-2\lambda(\bu,\bu_\cT).
\endaligned
\]
Using 
\begin{align*}
\lambda(\bu-\bu_\cT,\bu-\bu_\cT)=\lambda(\bu,\bu)+\lambda(\bu_\cT,\bu_\cT)-2\lambda(\bu,\bu_\cT)
\end{align*}
gives
\[
\aligned
&a(\bu-\bu_{\cT},\bu-\bu_{\cT}){+}\todo{2}b(\bu-\bu_{\cT},p-p_{\cT})\\
&\quad=-\lambda(\bu_\cT,\bu_\cT)+\lambda_\cT(\bu_\cT,\bu_\cT)+\lambda(\bu-\bu_\cT,\bu-\bu_\cT).
\endaligned
\]
This completes the proof.
\end{proof}
Similarly, we can prove the following identity.
 \begin{theorem}[Identity II]\label{algideii}
Let $\cTh\in\TT$ be any refinement of a given triangulation $\cT\in\TT$. Let $(\bu_{\cTh},p_{\cTh},\lambda_{\cTh})\in \bV_{\cTh}\times Q_{\cTh}\times
\mathbb{R}^{+}$ and $(\bu_\cT,p_\cT,\lambda_\cT)\in \bV_\cT\times Q_\cT\times
\mathbb{R}^{+}$ be solutions of the discrete weak formulation~\eqref{disform11}.
Then the following identity holds: 
\begin{align*}
\lambda_\cT-\lambda_{\cTh}&= a(\bu_{\cTh}-\bu_{\cT},\bu_{\cTh}-\bu_{\cT}){+}\todo{2}\b(\bu_{\cTh}-\bu_{\cT},p_{\cTh}-p_{\cT})
-\lambda_{\cTh}||\bu_{\cTh}-\bu_{\cT}||_0^2.
\end{align*}
\end{theorem}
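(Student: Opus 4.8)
The plan is to repeat, essentially verbatim, the algebra used in the proof of Theorem~\ref{algide} (Identity~I), with the continuous eigenpair $(\bu,p,\lambda)$ replaced by the fine-mesh discrete eigenpair $(\bu_{\cTh},p_{\cTh},\lambda_{\cTh})$ and the exact weak problem~\eqref{conweakstokeseig} replaced by the discrete problem~\eqref{disform11} posed on $\cTh$. The single structural fact that has to be invoked is that $\cTh$ is a refinement of $\cT$, so that the finite element spaces are nested, $\bV_\cT^p\subset\bV_{\cTh}^p$ and $Q_\cT^{p-1}\subset Q_{\cTh}^{p-1}$, i.e.\ $\cX_\cT^p\subset\cX_{\cTh}^p$; this is exactly what legitimizes using coarse-mesh functions as test functions in the fine-mesh discrete equations.

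First I would expand, using the symmetry of $a(\cdot,\cdot)$ and the bilinearity of $b(\cdot,\cdot)$,
\[
\aligned
&a(\bu_{\cTh}-\bu_\cT,\bu_{\cTh}-\bu_\cT)+2b(\bu_{\cTh}-\bu_\cT,p_{\cTh}-p_\cT)\\
&\quad=\bigl(a(\bu_{\cTh},\bu_{\cTh})+2b(\bu_{\cTh},p_{\cTh})\bigr)+\bigl(a(\bu_\cT,\bu_\cT)+2b(\bu_\cT,p_\cT)\bigr)\\
&\qquad\quad-2\bigl(a(\bu_{\cTh},\bu_\cT)+b(\bu_\cT,p_{\cTh})+b(\bu_{\cTh},p_\cT)\bigr).
\endaligned
\]
Next I would simplify the three groups using~\eqref{disform11}. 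Testing the $\cTh$-equations with $\bv=\bu_{\cTh}$ and $q=p_{\cTh}$ gives $b(\bu_{\cTh},p_{\cTh})=0$ and $a(\bu_{\cTh},\bu_{\cTh})=\lambda_{\cTh}\|\bu_{\cTh}\|_0^2=\lambda_{\cTh}$, so the first bracket is $\lambda_{\cTh}$; similarly the second bracket equals $\lambda_\cT$. Testing the first $\cTh$-equation with $\bv=\bu_\cT$ (admissible by nestedness) gives $a(\bu_{\cTh},\bu_\cT)+b(\bu_\cT,p_{\cTh})=\lambda_{\cTh}(\bu_{\cTh},\bu_\cT)$, and testing the second $\cTh$-equation with $q=p_\cT\in Q_\cT^{p-1}\subset Q_{\cTh}^{p-1}$ gives $b(\bu_{\cTh},p_\cT)=0$. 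Hence the right-hand side above equals $\lambda_{\cTh}+\lambda_\cT-2\lambda_{\cTh}(\bu_{\cTh},\bu_\cT)$.

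Finally I would eliminate the remaining cross term by polarization: since $\|\bu_{\cTh}\|_0=\|\bu_\cT\|_0=1$,
\[
\lambda_{\cTh}\|\bu_{\cTh}-\bu_\cT\|_0^2=2\lambda_{\cTh}-2\lambda_{\cTh}(\bu_{\cTh},\bu_\cT),
\]
so that $-2\lambda_{\cTh}(\bu_{\cTh},\bu_\cT)=\lambda_{\cTh}\|\bu_{\cTh}-\bu_\cT\|_0^2-2\lambda_{\cTh}$. Substituting this back, the expression collapses to $\lambda_\cT-\lambda_{\cTh}+\lambda_{\cTh}\|\bu_{\cTh}-\bu_\cT\|_0^2$, and rearranging yields the asserted identity. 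I do not expect a genuine obstacle here: the argument is purely algebraic and mirrors that of Identity~I. The only point that must not be skipped is the appeal to the nested spaces $\cX_\cT^p\subset\cX_{\cTh}^p$, without which neither the test function $\bv=\bu_\cT$ in the fine problem nor the vanishing of $b(\bu_{\cTh},p_\cT)$ could be justified.
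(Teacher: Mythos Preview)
Your proposal is correct and follows exactly the route the paper intends: the paper does not spell out a proof of Identity~II but simply says ``Similarly, we can prove the following identity,'' meaning the computation of Theorem~\ref{algide} carried over with $(\bu,p,\lambda)$ replaced by $(\bu_{\cTh},p_{\cTh},\lambda_{\cTh})$. Your explicit remark that the inclusion $\cX_\cT^p\subset\cX_{\cTh}^p$ is what allows $\bu_\cT$ and $p_\cT$ to be used as test functions in the $\cTh$-problem is the one structural point the paper leaves implicit, and it is precisely what makes the ``similarly'' valid.
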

\section{Error estimator and adaptive scheme}\label{sec3}
In this section we introduce the a posteriori error estimator that we are
using in our adaptive scheme and we state our main convergence and optimality
theorem.
We are also recalling the main tools needed for the proof in the spirit
of~\cite{dorfler1996convergent,dai2008convergence,boffi2017optimal}. The
actual proof of all the auxiliary results is postponed to the next section for
the sake of readability.

In \cite{gantumur2014convergence,feischl2019optimality,feischl2022inf}, a
residual based a posteriori error estimator for the Stokes source problem, is
discussed. That estimator is locally equivalent to the a posteriori error
estimator proposed by Verf\"urth in~\cite{verfurth2013posteriori}.
The local equivalence between the two estimators follows directly from the following estimates:
\begin{align}\label{est-11}
||{\vdiv \bu_{\cT}}||^2_{L^2(\Omega)}\le C_{11} \sum_{e\in\mathcal{E}}h_e||[\partial_n\bu_{\cT}]||^2_{L^2(e)}
\end{align}
and 
\begin{align}\label{est-12}
||\vdiv \bu_{\cT}|_T||^2_{L^2(\partial T)}\le C_{12}{h_T^{-1}}||{\vdiv \bu_{\cT}}||^2_{L^2(T)},
\end{align}
where $C_{11}$ and $C_{12}$ are positive constants. For more details, see also \cite{gantumur2014convergence,morin2008basic,bansch2002adaptive}.

We introduce a new estimator for the Stokes eigenvalue problem which is
locally equivalent to the one proposed in~\cite{lovadina2009posteriori}.
Specifically, our adaptive scheme is based on the following local error
estimator for all $T\in\cT$:
\begin{equation}
\aligned
\eta_T^2&:= h_T^2||\lambda_{\cT}\bu_{\cT}+\Delta \bu_{\cT}-\nabla p_{\cT}||_{L^2(T)}^2\\
&\quad+h_T||[\partial_n\bu_{\cT}]||^2_{L^2(\partial{T}\cap\Omega)}
+h_T||\vdiv \bu_{\cT}|_T||^2_{L^2(\partial T)}.
\endaligned
\label{eq:estimator}
\end{equation}
Given a triangulation $\cT$, our global estimator reads
\begin{align}
\eta(\cT):= \left(\sum_{T\in\cT}\eta_T^2\right)^{1/2}.
\end{align}
For a set of elements $\mathcal{M}$, we define the notation $\eta(\mathcal{M})$ as
\begin{align}
\eta(\mathcal{M}):= \left(\sum_{T\in\mathcal{M}}\eta_T^2\right)^{1/2}.
\end{align}

In order to keep our notation lighter, we consider a simple eigenvalue (that
is, an eigenvalue with multiplicity $1$). This fact has been already used
implicitly in the definition of the estimator. More general situations could
be considered in the spirit of~\cite{boffi2017optimal}.

Let $\lambda$ be a simple eigenvalue with the associated one dimensional
eigenspace $W={\rm \textbf{span}}(u)$ with $u=(\bu,p)$. Let $\lambda_\cT$ be
the approximated (discrete) eigenvalue with the associated one dimensional
eigenspace $W_\cT={\rm \textbf{span}}(u_\cT)$ with $u_\cT=(\bu_\cT,p_\cT)$. We
are interested in estimating the error in the eigenvalue approximation
$|\lambda-\lambda_\cT{|}$ and the gap between the continuous eigenspace $W$ and
the approximated eigenspace $W_\cT$ $\delta(W,W_\cT)$.

Next, we state the reliability and the efficiency results for our a posteriori estimator.
\begin{proposition}\label{releff11}
Let $(u,\lambda)=(\bu,p,\lambda)$ and
$(u_\cT,\lambda_\cT)=(\bu_\cT,p_\cT,\lambda_\cT)$ be the continuous
solution of~\eqref{conweakstokeseig} and the discrete solution
of~\eqref{disform11}, respectively. Then, the following reliability estimate
holds:
\begin{align}
|||u-u_\cT|||&\le C (\eta(\cT)+|\lambda-\lambda_\cT|+\lambda||\bu-\bu_\cT||_0),\\
|\lambda-\lambda_\cT|&\le C
(\eta(\cT)^2+|\lambda-\lambda_\cT|^2+\lambda^2||\bu-\bu_\cT||_0^2),
\end{align}
where $C$ is a positive constant independent of the mesh size $h_\cT$.
Moreover, the following local efficiency result holds:
\begin{align}
\eta_T \le
C\left(|||u-u_\cT|||_{\omega(T)}+\sum_{T'\subset\omega(T)}h_{T'}^{1/2}(|\lambda-\lambda_\cT|+\lambda||\bu-\bu_\cT||_{0,T'})\right),
\end{align}
where $\omega(T)$ is the union of all elements that share at least one edge
$(d=2)$ or one face $(d=3)$ with T. 
\end{proposition}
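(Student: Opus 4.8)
The plan is to establish the three estimates separately, leveraging the identity of Theorem~\ref{algide} for the eigenvalue bounds and a standard residual-based argument for the energy-norm reliability. First I would prove the energy-norm reliability bound. The key observation is that $(u-u_\cT,\lambda\bu-\lambda_\cT\bu_\cT)$ solves a Stokes \emph{source} problem: the pair $(\bu,p)$ satisfies $a(\bu,\bv)+b(\bv,p)=(\lambda\bu,\bv)$ and $b(\bu,q)=0$, while $(\bu_\cT,p_\cT)$ satisfies the discrete version with right-hand side $\lambda_\cT\bu_\cT$. Hence $u-u_\cT$ is the error of a mixed finite element approximation to the Stokes problem with data $f=\lambda\bu$, except that the discrete solution uses the perturbed data $\lambda_\cT\bu_\cT$. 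Writing $\lambda\bu = \lambda_\cT\bu_\cT + (\lambda\bu-\lambda_\cT\bu_\cT)$ and invoking the residual a posteriori estimator theory for the Stokes source problem (as in~\cite{verfurth2013posteriori,gantumur2014convergence}), together with the local equivalence recorded in~\eqref{est-11}--\eqref{est-12} that lets us replace the Verf\"urth estimator by~\eqref{eq:estimator}, we obtain
\[
|||u-u_\cT|||\le C\bigl(\eta(\cT)+\|\lambda\bu-\lambda_\cT\bu_\cT\|_{-1}\bigr),
\]
where $\|\cdot\|_{-1}$ denotes the $\bH^{-1}$ norm of the data perturbation. The perturbation term is then split as $\lambda\bu-\lambda_\cT\bu_\cT = (\lambda-\lambda_\cT)\bu + \lambda_\cT(\bu-\bu_\cT)$, whose $\bH^{-1}$ norm is bounded by $C(|\lambda-\lambda_\cT| + \lambda\|\bu-\bu_\cT\|_0)$ after absorbing $|\lambda_\cT-\lambda|$ into the first summand using that $\lambda_\cT$ is bounded; this yields the first inequality.

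Next I would prove the eigenvalue reliability bound. Here the starting point is Theorem~\ref{algide}, which gives
\[
\lambda_\cT-\lambda = a(\bu-\bu_\cT,\bu-\bu_\cT)+2b(\bu-\bu_\cT,p-p_\cT)-\lambda\|\bu-\bu_\cT\|_0^2.
\]
Applying Cauchy--Schwarz (or the continuity bound~\eqref{cont11}) to the first two terms bounds them by $C|||u-u_\cT|||^2$, so $|\lambda-\lambda_\cT|\le C|||u-u_\cT|||^2 + \lambda\|\bu-\bu_\cT\|_0^2$. Substituting the energy-norm reliability already obtained and using $(a+b+c)^2\le 3(a^2+b^2+c^2)$ produces the stated quadratic bound $|\lambda-\lambda_\cT|\le C(\eta(\cT)^2+|\lambda-\lambda_\cT|^2+\lambda^2\|\bu-\bu_\cT\|_0^2)$. (Note the right-hand side still contains $|\lambda-\lambda_\cT|^2$ and $\|\bu-\bu_\cT\|_0$; these are genuinely higher-order thanks to Lemma with~\eqref{l2h1ineq}, and the terms are meant to be absorbed once $h_\cT$ is small enough in the convergence analysis, so no Young-type absorption is performed at this stage.)

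Finally, the local efficiency estimate is the standard bubble-function argument of Verf\"urth adapted to the eigenvalue residual. On each element $T$ one tests the equation residual $\lambda_\cT\bu_\cT+\Delta\bu_\cT-\nabla p_\cT$ against an interior bubble times itself, and on each edge $e$ one tests the jump $[\partial_n\bu_\cT]$ against an edge bubble; integrating by parts and using inverse estimates together with the PDE satisfied by $(\bu,p)$ (namely $-\Delta\bu+\nabla p=\lambda\bu$) produces, for the first two terms of $\eta_T$,
\[
h_T\|\lambda_\cT\bu_\cT+\Delta\bu_\cT-\nabla p_\cT\|_{L^2(T)} + h_T^{1/2}\|[\partial_n\bu_\cT]\|_{L^2(\partial T\cap\Omega)} \le C\Bigl(|||u-u_\cT|||_{\omega(T)} + \sum_{T'\subset\omega(T)}h_{T'}\|\lambda\bu-\lambda_\cT\bu_\cT\|_{0,T'}\Bigr),
\]
and splitting $\lambda\bu-\lambda_\cT\bu_\cT$ as above gives the $h_{T'}^{1/2}(|\lambda-\lambda_\cT|+\lambda\|\bu-\bu_\cT\|_{0,T'})$ contribution. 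For the third term $h_T^{1/2}\|\vdiv\bu_\cT|_T\|_{L^2(\partial T)}$, estimate~\eqref{est-12} reduces it to $h_T^{-1/2}\|\vdiv\bu_\cT\|_{L^2(T)} = h_T^{-1/2}\|\vdiv(\bu-\bu_\cT)\|_{L^2(T)}$ (since $\vdiv\bu=0$), which is controlled by $|||u-u_\cT|||_T$ via an inverse inequality in $P^p$. I expect the main obstacle to be the bookkeeping of the data-oscillation/perturbation terms: one must carefully track that the right-hand side data is $\lambda_\cT\bu_\cT$ rather than $\lambda\bu$ throughout both the reliability and efficiency arguments, and verify that the difference is consistently captured by the $|\lambda-\lambda_\cT|$ and $\lambda\|\bu-\bu_\cT\|_0$ terms at the right local scaling $h_{T'}$ versus $h_{T'}^{1/2}$.
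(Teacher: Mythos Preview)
Your argument is correct and is essentially the route the paper takes, only unpacked: the paper's proof is a one-line appeal to the reliability and efficiency results of~\cite{lovadina2009posteriori} together with the local equivalence~\eqref{est-11}--\eqref{est-12}, whereas you reconstruct those results from scratch (source-problem reliability plus data perturbation, Identity~I for the eigenvalue, bubble functions for efficiency). One small slip: when you handle the divergence term with~\eqref{est-12}, the $h_T^{1/2}$ in front of the boundary norm cancels the $h_T^{-1/2}$ coming from~\eqref{est-12}, so you end up with $\|\vdiv\bu_\cT\|_{L^2(T)}=\|\vdiv(\bu-\bu_\cT)\|_{L^2(T)}\le|||u-u_\cT|||_T$ directly---no inverse inequality is needed (and none would apply, since $\bu$ is not piecewise polynomial).
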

\begin{proof}
Note that our estimator for the Stokes eigenvalue problem  is locally
equivalent to the a posteriori error estimator proposed
in~\cite{lovadina2009posteriori} with (\ref{est-11}) and (\ref{est-12}).
Hence, the stated results directly follows from the reliability and the
efficiency results discussed in~\cite{lovadina2009posteriori}.
\end{proof}
\begin{corollary}
The following reliability estimates hold:
\begin{align}
|||u-u_\cT|||&\le C \eta(\cT)+\rho_{rel1}(h_\cT)|||u-u_\cT|||,\\
|\lambda-\lambda_\cT|&\le C \eta(\cT)^2+\rho_{rel2}(h_\cT)|||u-u_\cT|||^2,
\end{align} 
where $\rho_{rel1}(h{)}$ and $\rho_{rel2}(h)$ tend to zero as $h$ tends to zero. Moreover, it also holds:
\begin{align}
\eta_T \le C|||u-u_\cT|||_{\omega(T)},
\end{align}
{for sufficiently small $h_\cT$.}
\end{corollary}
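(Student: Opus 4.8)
The plan is to obtain all three bounds directly from Proposition~\ref{releff11}, by absorbing the two additional terms $|\lambda-\lambda_\cT|$ and $\|\bu-\bu_\cT\|_0$ that appear on its right-hand sides once $h_\cT$ is small enough. Two facts make this possible. First, the $L^2$-estimate~\eqref{l2h1ineq} gives $\|\bu-\bu_\cT\|_0\le\rho(h_\cT)\,|||u-u_\cT|||$ with $\rho(h_\cT)\to0$. Second, Identity~I (Theorem~\ref{algide}), combined with the continuity of the bilinear forms $a(\cdot,\cdot)$ and $b(\cdot,\cdot)$ and again with~\eqref{l2h1ineq}, shows that
\[
|\lambda-\lambda_\cT|\le C_I\,|||u-u_\cT|||^2 ;
\]
since the generalized Taylor--Hood approximation of the Stokes eigenvalue problem converges a priori (so that $|||u-u_\cT|||\to0$ as $h_\cT\to0$, see~\cite{boffi2010finite,bbf}), this last inequality also reads $|\lambda-\lambda_\cT|\le\bar\rho(h_\cT)\,|||u-u_\cT|||$ with $\bar\rho(h_\cT):=C_I|||u-u_\cT|||\to0$.

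For the first estimate I would insert these two facts into the first reliability bound of Proposition~\ref{releff11}, obtaining $|||u-u_\cT|||\le C\eta(\cT)+C\bigl(\bar\rho(h_\cT)+\lambda\rho(h_\cT)\bigr)|||u-u_\cT|||$, so that $\rho_{rel1}(h_\cT):=C(\bar\rho(h_\cT)+\lambda\rho(h_\cT))$ does the job (and, since this coefficient is below $1$ for small $h_\cT$, one could even absorb it to the left and get the cleaner $|||u-u_\cT|||\le C\eta(\cT)$). For the second estimate the shortest route is to square the first estimate, $|||u-u_\cT|||^2\le 2C^2\eta(\cT)^2+2\rho_{rel1}(h_\cT)^2|||u-u_\cT|||^2$, and combine it with the displayed quadratic bound on $|\lambda-\lambda_\cT|$, which yields the claim with $\rho_{rel2}(h_\cT):=2C_I\rho_{rel1}(h_\cT)^2\to0$; alternatively one may start from the second reliability bound of Proposition~\ref{releff11} and absorb its $|\lambda-\lambda_\cT|^2$ term using $|\lambda-\lambda_\cT|\to0$.

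For the local efficiency I would start from the local efficiency bound of Proposition~\ref{releff11}. The summand $\lambda h_{T'}^{1/2}\|\bu-\bu_\cT\|_{0,T'}$ is easy, since $\|\bu-\bu_\cT\|_{0,T'}\le|||u-u_\cT|||_{\omega(T)}$ and the prefactor $\lambda h_{T'}^{1/2}\to0$. The hard part will be the term $h_{T'}^{1/2}|\lambda-\lambda_\cT|$: here $|\lambda-\lambda_\cT|$ is a global scalar while the target $|||u-u_\cT|||_{\omega(T)}$ is purely local, so a naive absorption is obstructed. I would handle it by using the displayed quadratic bound on $|\lambda-\lambda_\cT|$ together with the a priori estimate $|||u-u_\cT|||\le\rho_{\mathrm{apr}}(h_\cT)$, and by exploiting the extra factor $h_{T'}^{1/2}$ and the finite overlap of the patches $\omega(T)$, so that after summation the global eigenvalue error is dominated by $|||u-u_\cT|||_{\omega(T)}$ up to a factor tending to zero with $h_\cT$. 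This is precisely the step where the smallness of $h_\cT$ and the a priori convergence of the scheme are essential, and I expect it to be the main technical point of the argument.
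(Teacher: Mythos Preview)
Your treatment of the two reliability estimates is correct and coincides with the paper's (very brief) proof, which simply says to combine Proposition~\ref{releff11} with~\eqref{l2h1ineq}. Your explicit appeal to Identity~I (Theorem~\ref{algide}) to turn $|\lambda-\lambda_\cT|$ into a quantity of order $|||u-u_\cT|||^2$, and then into $\bar\rho(h_\cT)\,|||u-u_\cT|||$ via a priori convergence, is exactly the missing detail needed to make that one-line argument work.

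For the local efficiency there is a genuine gap in your sketch. The ingredients you list---the quadratic bound on $|\lambda-\lambda_\cT|$, the a priori decay of $|||u-u_\cT|||$, the factor $h_{T'}^{1/2}$, and finite overlap of the patches---lead at best to a \emph{global} efficiency $\eta(\cT)\le C\,|||u-u_\cT|||$; they do not give the \emph{local} bound $\eta_T\le C\,|||u-u_\cT|||_{\omega(T)}$. The obstruction you identify is real: $|\lambda-\lambda_\cT|$ is a global scalar, and neither ``summation'' nor ``finite overlap'' converts a global norm into a single patch norm (in a local estimate there is nothing to sum over). The paper itself does not resolve this within its proof; it defers the local-efficiency details to~\cite[Page~1259]{boffi2019posteriori}. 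So your instinct that this is the main technical point is correct, but the summation idea you propose does not close the gap, and you would need the argument from the cited reference rather than the one sketched here.
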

\begin{proof}
Combining Proposition \ref{releff11} and  (\ref{l2h1ineq}) leads {to} the stated result. {For further details on the local efficiency bound, we refer to \cite[Page 1259]{boffi2019posteriori}.}
\end{proof}

We consider a standard adaptive scheme based on the error
estimator~\eqref{eq:estimator} and on the usual paradigm
SOLVE--ESTIMATE--MARK--REFINE. We describe it in \textbf{Algorithm 1}, where we
use the short notation $\bullet_\ell$ for $\bullet_{\cT_\ell}$. In practice,
\textbf{Algorithm 1} should be combined with a suitable stopping criterion
associated to a given error tolerance.

\begin{algorithm}
\textbf{Algorithm 1}: Adaptive FEM for the Stokes eigenvalue problem \\
\textbf{Input} Given initial mesh $\cT_0$ and parameter $0<\theta<1$.\\
For $\ell=0,1,\cdots$ do:
\begin{itemize}
\item \textbf{Solve}: Compute $(\bu_\ell,p_\ell,\lambda_\ell)$ from (\ref{disform11})
\item \textbf{Estimate}: Compute $\eta_T$ for all $T\in\cT_\ell$
\item \textbf{Mark}: Obtain a set $\mathcal{M}_\ell\subseteq \cT_\ell$ with minimum cardinality such that
\begin{align*}
\eta(\mathcal{M}_\ell)^2\ge \theta \eta(\cT_\ell)^2
\end{align*}
\item \textbf{Refine}: Apply newest-vertex-bisection to refine $\cT_\ell$ and to obtain a new mesh $\cT_{\ell+1}$
\end{itemize}
\textbf{Output}: Sequence of meshes $\cT_\ell$ and corresponding approximations of the eigenvalue $\lambda_\ell$ and the eigenfunction $(\bu_\ell,p_\ell)$.
\end{algorithm}

We are now in a position to introduce and state the main result of our paper.
The convergence of the adaptive scheme is usually assessed by considering
nonlinear approximation classes, see~\cite{bdd}. In particular, we are going
to use the following semi-norm
\[
|W|_{\mathcal{A}_s}=
\sup_{m\in\NN}m^s\inf_{\substack{\cT\in\TT\\ card(\cT)=m}}\delta(W,\cX^p_\cT),
\]
where $\TT$ is the set of admissible triangulations and the cardinality
$card(\cT)$ is the number of elements of the mesh $\cT$. {For more details about the semi-norm, we refer to~\cite{gallistl2015optimal,boffi2017optimal,bdd}.}

\begin{theorem}\label{optthm}
Let $\cT_\ell$ be the sequence of meshes generated by \textbf{Algorithm 1}
with {the sufficiently small} bulk parameter $\theta\in(0,1)$ starting from a given mesh $\cT_0$ {which satisfies the assumption discussed in Section \ref{sec2}}.
If the continuous eigenspace $W$ has bounded $\mathcal{A}_s$-seminorm for some
$s$, i.e. $|W|_{\mathcal{A}_s}<\infty$, then the output of the adaptive scheme
satisifies  the following optimal estimate:
\begin{align}
\delta(W,W_{\ell})\le C(card(\cT_\ell)-card(\cT_0))^{-s}|W|_{\mathcal{A}}^s.
\end{align}
Moreover, it also holds:
\begin{align}
|\lambda-\lambda_\ell|\le C\,\delta(W,W_{\ell})^2.
\end{align}
\end{theorem}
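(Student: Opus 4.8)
The goal is to verify the four axioms of adaptivity from Carstensen--Feischl--Page--Praetorius~\cite{carstensen2014axioms} for the estimator $\eta$ defined in~\eqref{eq:estimator}, namely \emph{stability on non-refined elements}, \emph{reduction on refined elements}, \emph{general quasi-orthogonality}, and \emph{discrete reliability}, and then invoke the abstract theorem of that framework to conclude the optimal rate $\delta(W,W_\ell)\lesssim (\mathrm{card}(\cT_\ell)-\mathrm{card}(\cT_0))^{-s}|W|_{\mathcal{A}_s}$; the eigenvalue estimate $|\lambda-\lambda_\ell|\lesssim\delta(W,W_\ell)^2$ then follows from Identity~I (Theorem~\ref{algide}) combined with the reliability corollary and~\eqref{l2h1ineq}, since the right-hand side of Identity~I is $O(\delta(W,W_\ell)^2) + O(\rho(h_\ell)^2\delta(W,W_\ell)^2) + O(\rho(h_\ell)^2\delta(W,W_\ell)^2)$, and all three terms are controlled by $\delta(W,W_\ell)^2$ for $h_\ell$ small.

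First I would dispense with stability and reduction: these are purely algebraic/geometric properties of residual-type estimators on newest-vertex-bisection meshes and follow verbatim as in~\cite{feischl2019optimality,carstensen2014axioms} once one observes that each of the three contributions in~\eqref{eq:estimator} — the element residual $h_T\|\lambda_\cT\bu_\cT+\Delta\bu_\cT-\nabla p_\cT\|_{L^2(T)}$, the edge jump $h_T^{1/2}\|[\partial_n\bu_\cT]\|_{L^2(\partial T\cap\Omega)}$, and the divergence boundary term $h_T^{1/2}\|\vdiv\bu_\cT|_T\|_{L^2(\partial T)}$ — is a mesh-size-weighted norm of a discrete polynomial quantity, so inverse estimates and the triangle inequality applied to $u_\ell - u_{\hat\cT}$ give stability with constant depending only on shape-regularity and on (a uniform bound for) $\lambda_\ell$, while bisection halves $h_T$ and hence gives the reduction factor $\rho_{\mathrm{red}} = 2^{-1/2}<1$ on refined elements. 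The extra eigenvalue-dependent lower-order term in the residual is harmless because $\lambda_\ell$ is uniformly bounded along the adaptive sequence (it converges to $\lambda$).

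The two substantive steps are discrete reliability and quasi-orthogonality. For \textbf{discrete reliability} I would follow the template of~\cite[Section~4]{feischl2019optimality}: given $\hat\cT$ a refinement of $\cT$, one must bound $|||u_{\hat\cT}-u_\cT|||$ by $\eta$ restricted to the set of refined elements (plus the data-oscillation / eigenvalue perturbation terms), using the inf-sup stability Lemma~\ref{stab11} to pick a test function $v_{\hat\cT}$, inserting a quasi-interpolant $v_\cT\in\cX_\cT^p$ of it so that the Galerkin orthogonality of~\eqref{disform11} on $\cT$ kills the contribution of $v_\cT$, and then localizing the residual $B(u_{\hat\cT}-u_\cT,v_{\hat\cT}-v_\cT) = \lambda_{\hat\cT}(\bu_{\hat\cT},\cdot)-\lambda_\cT(\bu_\cT,\cdot)$-type expression to the refined region via the support property of $v_{\hat\cT}-v_\cT$; the divergence boundary term requires the auxiliary estimates~\eqref{est-11}--\eqref{est-12} exactly as in the source-problem case. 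The eigenvalue right-hand side produces a term $|\lambda_{\hat\cT}-\lambda_\cT|\,\|\cdot\|_0 + \lambda_\cT\|\bu_{\hat\cT}-\bu_\cT\|_0\|\cdot\|_0$, which by~\eqref{l2h1ineq} and Identity~II (Theorem~\ref{algideii}) is of higher order, i.e.\ absorbable for small $h_\cT$.

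The \textbf{main obstacle} — and the heart of the paper — is \emph{general quasi-orthogonality}, which for the source Stokes problem is already nontrivial (it is the part of~\cite{feischl2019optimality} later repaired via the $LU$-factorization argument of~\cite{feischl2022inf}); here the eigenvalue coupling makes the naive Pythagoras-type identity fail twice over, once because $B$ is not symmetric-positive on the whole mixed space (the pressure block) and once because the eigenvalue $\lambda_\ell$ itself drifts. My plan is to prove the summed estimate $\sum_{k=\ell}^{N}\big(|||u_{k+1}-u_k|||^2 - C_{\mathrm{qo}}^{-1}|||u-u_{k+1}|||^2\big)\le C_{\mathrm{qo}}\,|||u-u_\ell|||^2$ by writing the bilinear form $B$ on the nested sequence of spaces as an infinite block matrix, using the symmetric part $a(\cdot,\cdot)$ to get an approximate orthogonality and treating $b(\cdot,\cdot)$ plus the eigenvalue perturbation $\lambda_k(\bu_k,\cdot)-\lambda(\bu,\cdot)$ as a compact/lower-order perturbation that, by~\eqref{l2h1ineq}, has norm tending to zero; then the $LU$-factorization lemma of~\cite[Theorem~?]{feischl2022inf} applied to the resulting perturbed infinite matrix yields the bound with a constant uniform in $N$. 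The two technical points to watch are (i) that Identity~I and Identity~II must be used to convert eigenvalue differences $\lambda_k-\lambda_{k+1}$ into the quadratic energy quantities so that they can be fed into the matrix argument, and (ii) that the smallness of $\rho(h_\ell)$ (hence the requirement of a sufficiently fine initial mesh, which is why $\theta$ and $\cT_0$ carry side conditions in the statement) is what makes the perturbation small enough for the $LU$-factorization to exist. Once these four axioms are in hand, the abstract machinery of~\cite{carstensen2014axioms} delivers both displayed estimates.
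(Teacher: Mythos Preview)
Your overall architecture is correct and matches the paper: verify the four axioms (stability, reduction, discrete reliability, quasi-orthogonality) and then feed them into the abstract machinery of~\cite{carstensen2014axioms}; the eigenvalue estimate follows from Identity~I plus~\eqref{l2h1ineq}. Your treatment of stability, reduction, and discrete reliability is essentially what the paper does.

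The quasi-orthogonality step, however, is where your plan diverges from the paper and where it contains a real gap. You propose to run the $LU$-factorization argument of~\cite{feischl2022inf} directly on the eigenvalue bilinear form, ``treating $b(\cdot,\cdot)$ plus the eigenvalue perturbation $\lambda_k(\bu_k,\cdot)-\lambda(\bu,\cdot)$ as a compact/lower-order perturbation that \dots\ has norm tending to zero''. This mischaracterizes the role of $b$: the off-diagonal block $b(\cdot,\cdot)$ is neither compact nor small --- it is the full divergence form and does not vanish as $h\to 0$. In~\cite{feischl2022inf} the $LU$ argument is applied to the \emph{full} mixed form $B$ (including $b$) of the \emph{source} problem, not to $a$ with $b$ perturbed away. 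Also, that argument does not give a constant uniform in $N$; it gives $C(N)=o(N)$ (cf.\ Remark~\ref{remark1}), and the paper then needs the extra Lemmas~\ref{lem-lin11}--\ref{lem-lin12} to upgrade this sublinear growth to linear convergence.

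The paper's route to quasi-orthogonality is different and avoids redoing the matrix argument for the eigenvalue problem. It introduces the source-problem solution operators $\mathbf{T}$, $\mathbf{T}_\ell$ and writes
\[
u_{\ell+k+1}-u_{\ell+k}
=(\lambda_{\ell+k+1}-\lambda_{\ell+k})\mathbf{T}_{\ell+k+1}u_{\ell+k+1}
+\lambda_{\ell+k}(\mathbf{T}_{\ell+k+1}-\mathbf{T}_{\ell+k})(u_{\ell+k+1}-u)
+\lambda_{\ell+k}(\mathbf{T}_{\ell+k+1}-\mathbf{T}_{\ell+k})u
+\lambda_{\ell+k}\mathbf{T}_{\ell+k}(u_{\ell+k+1}-u_{\ell+k}).
\]
Only the third term, $(\mathbf{T}_{\ell+k+1}-\mathbf{T}_{\ell+k})u$ with the \emph{fixed} right-hand side $u$, is summed using the source-problem quasi-orthogonality from~\cite{feischl2022inf}; the remaining three terms are genuinely higher-order and are absorbed via~\eqref{l2h1ineq} and Identities~I--II, producing the $\rho_3(h_\ell)\sum\|u-u_{\ell+k}\|^2$ tail in~\eqref{gqo}. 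In short: the paper reduces the eigenvalue quasi-orthogonality to the already-proved source-problem quasi-orthogonality, rather than re-running the $LU$ argument on a varying bilinear form.
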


The proof of the theorem together with the proof of a series of preparatory
results, will be given in Section~\ref{se:proofs}.

\section{Proof of convergence and rate optimality}
\label{se:proofs}
We follow the abstract setting used in~\cite{boffi2019adaptive}, see
also~\cite{carstensen2014axioms}.
Hence, the result of Theorem~\ref{optthm} follows from a series of preparatory
results that we are going to state and prove in this section.

\begin{proposition}[\textbf{Stability on nonrefined elements}]
Let $\cTh\in\TT$ be any refinement of a given triangulation $\cT\in\TT$ and
$S\subseteq\cT\cap\cTh$ be the set of the nonrefined elements. Then
\begin{equation}
\left|\left(\sum_{T\in S}\eta_{T}(\cTh)^2\right)^{1/2}-
\left(\sum_{T\in S}\eta_{T}(\cT)^2\right)^{1/2}\right|\le C_{stab}
|||u_{\cTh}-u_\cT|||.
\label{eq:stabnonref}
\end{equation}
\end{proposition}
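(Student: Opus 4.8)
The plan is to prove the estimate \eqref{eq:stabnonref} elementwise, exploiting that each local indicator $\eta_T$ is a sum of three terms — an interior residual term, an edge-jump term and a divergence boundary term — each of which is, on a \emph{nonrefined} element $T\in S$, a polynomial expression in the discrete solution $u_\cT$ (respectively $u_{\cTh}$) evaluated on $T$ and its neighbours. Since $T$ and all the edges $\partial T\cap\Omega$ are common to both meshes $\cT$ and $\cTh$, the only change between $\eta_T(\cT)$ and $\eta_T(\cTh)$ comes from replacing $(\bu_\cT,p_\cT,\lambda_\cT)$ by $(\bu_{\cTh},p_{\cTh},\lambda_{\cTh})$; the mesh-size weights $h_T$ and $h_e$ are identical. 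First I would use the elementary inequality $|(\sum a_T)^{1/2}-(\sum b_T)^{1/2}|\le(\sum|a_T-b_T|\cdot\text{(something)})^{1/2}$ — more precisely the reverse triangle inequality in $\ell^2$, $\big|\|x\|_{\ell^2}-\|y\|_{\ell^2}\big|\le\|x-y\|_{\ell^2}$ applied with $x_T=\eta_T(\cTh)$, $y_T=\eta_T(\cT)$ — to reduce the claim to showing
\[
\sum_{T\in S}\big(\eta_T(\cTh)-\eta_T(\cT)\big)^2\le C_{stab}^2\,|||u_{\cTh}-u_\cT|||^2.
\]

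Next I would bound each factor $|\eta_T(\cTh)-\eta_T(\cT)|$ by again using the triangle inequality on the three constituent seminorms of $\eta_T$, and for each one applying the triangle inequality for the relevant $L^2$-norm (on $T$, on $\partial T\cap\Omega$, on $\partial T$). This produces terms of the form $h_T\|\lambda_{\cTh}\bu_{\cTh}-\lambda_\cT\bu_\cT+\Delta(\bu_{\cTh}-\bu_\cT)-\nabla(p_{\cTh}-p_\cT)\|_{L^2(T)}$, $h_T^{1/2}\|[\partial_n(\bu_{\cTh}-\bu_\cT)]\|_{L^2(\partial T\cap\Omega)}$ and $h_T^{1/2}\|\vdiv(\bu_{\cTh}-\bu_\cT)|_T\|_{L^2(\partial T)}$. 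For the interior term I would split off the eigenvalue difference, writing $\lambda_{\cTh}\bu_{\cTh}-\lambda_\cT\bu_\cT=\lambda_{\cTh}(\bu_{\cTh}-\bu_\cT)+(\lambda_{\cTh}-\lambda_\cT)\bu_\cT$, use boundedness of $\lambda_{\cTh}$ and of $\|\bu_\cT\|_0$, and control $|\lambda_{\cTh}-\lambda_\cT|$ by $|||u_{\cTh}-u_\cT|||$ times a factor tending to zero, via Identity~II (Theorem~\ref{algideii}) together with \eqref{l2h1ineq}. Each of the remaining pieces is handled by a standard inverse/trace inequality on the shape-regular element $T$: $h_T\|\Delta\bv\|_{L^2(T)}\le C\|\nabla\bv\|_{L^2(T)}$, $h_T\|\nabla q\|_{L^2(T)}\le C\|q\|_{L^2(T)}$, $h_e^{1/2}\|[\partial_n\bv]\|_{L^2(e)}\le C\|\nabla\bv\|_{L^2(\omega_e)}$, and $h_T^{1/2}\|\vdiv\bv\|_{L^2(\partial T)}\le C\|\nabla\bv\|_{L^2(T)}$, the last of which is exactly in the spirit of \eqref{est-12}.

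Summing the resulting local bounds over $T\in S$, invoking finite overlap of the patches $\omega(T)$ (a consequence of shape regularity), yields $\sum_{T\in S}(\eta_T(\cTh)-\eta_T(\cT))^2\le C\,|||u_{\cTh}-u_\cT|||^2$, which is the claim. The main obstacle I anticipate is the treatment of the eigenvalue difference $(\lambda_{\cTh}-\lambda_\cT)$ appearing in the interior residual term: unlike the source-problem case treated in \cite{feischl2019optimality}, here this contributes a genuinely extra term that is not a priori bounded by $|||u_{\cTh}-u_\cT|||$ alone. The resolution is that for sufficiently fine meshes $|\lambda_{\cTh}-\lambda_\cT|\lesssim\rho(h_\cT)\,|||u_{\cTh}-u_\cT|||$ (from Identity~II and the $L^2$-estimate \eqref{l2h1ineq}), so this term may either be absorbed with a mesh-dependent constant or simply bounded by $C\,|||u_{\cTh}-u_\cT|||$ using that $\rho$ is bounded; one must only be careful that the constant $C_{stab}$ remains independent of the mesh, which it does since $\rho(h_\cT)\le\rho(h_{\cT_0})$ along the adaptively generated sequence.
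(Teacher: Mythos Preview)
Your proposal is correct and follows the same approach as the paper: apply the (reverse) triangle inequality at the $\ell^2$ level to reduce to bounding the difference terms $h_T\|\lambda_{\cTh}\bu_{\cTh}-\lambda_\cT\bu_\cT+\Delta(\bu_{\cTh}-\bu_\cT)-\nabla(p_{\cTh}-p_\cT)\|_{L^2(T)}$, $h_T^{1/2}\|[\partial_n(\bu_{\cTh}-\bu_\cT)]\|_{L^2(\partial T\cap\Omega)}$ and $h_T^{1/2}\|\vdiv(\bu_{\cTh}-\bu_\cT)\|_{L^2(\partial T)}$, and then control these by $|||u_{\cTh}-u_\cT|||$ via inverse and trace inequalities. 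In fact your write-up is more complete than the paper's, which simply displays the two triangle inequalities and concludes ``by combining the above two estimates, we derive the stated result'' without spelling out the inverse estimates or the handling of the eigenvalue difference $\lambda_{\cTh}-\lambda_\cT$; your use of Identity~II for the latter is exactly the right ingredient.
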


\begin{proof}
Let $\cTh\in\TT$ be any refinement of the given triangulation $\cT\in\TT$ and
$S\subseteq\cT\cap\cTh$ be the set of the nonrefined elements. Then
\[
\aligned
&\left(\sum_{T\in S}\eta_{T}(\cTh)^2\right)^{1/2}\le\left(\sum_{T\in S}\eta_{T}(\cT)^2\right)^{1/2}\\
&\quad+\Bigg(\sum_{T\in S}\big(h_T^2||\lambda_{\cTh}\bu_{\cTh}+\Delta \bu_{\cTh}-\nabla p_{\cTh}-(\lambda_{\cT}\bu_{\cT}+\Delta \bu_{\cT}-\nabla p_{\cT})||_{L^2(T)}^2\\
&\qquad\quad+h_T||[\partial_n(\bu_{\cTh}-\bu_{\cT})]||^2_{L^2(\partial{T}\cap\Omega)}+h_T||\vdiv
(\bu_{\cTh}-\bu_{\cT})|_T||^2_{L^2(\partial T)}\big)\Bigg)^{1/2}
\endaligned
\]
and 
\[
\aligned
&\left(\sum_{T\in S}\eta_{T}(\cT)^2\right)^{1/2}\le\left(\sum_{T\in S}\eta_{T}(\cTh)^2\right)^{1/2}\\
&\quad+\Bigg(\sum_{T\in S}\big(h_T^2||\lambda_{\cTh}\bu_{\cTh}+\Delta \bu_{\cTh}-\nabla p_{\cTh}-(\lambda_{\cT}\bu_{\cT}+\Delta \bu_{\cT}-\nabla p_{\cT})||_{L^2(T)}^2\\
&\qquad\quad+h_T||[\partial_n(\bu_{\cTh}-\bu_{\cT})]||^2_{L^2(\partial{T}\cap\Omega)}+h_T||\vdiv
(\bu_{\cTh}-\bu_{\cT})|_T||^2_{L^2(\partial T)}\big)\Bigg)^{1/2}.
\endaligned
\]
By combining the above two estimates, we derive the stated result.
\end{proof}

\begin{proposition}[\textbf{Reduction property on refined elements}]
For any refinement $\cTh\in\TT$ of the given triangulation $\cT\in\TT$, the following condition holds:
\begin{equation}
\sum_{T\in\cTh\setminus\cT}\eta_{T}(\cTh)^2\le q_{red}
\sum_{T\in\cT\setminus\cTh}\eta_{T}(\cT)^2+C_{red}|||u_{\cTh}-u_\cT|||^2
\label{eq:redprop}
\end{equation}
{with $q_{red} <1$.}
\end{proposition}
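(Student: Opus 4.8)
The plan is to establish the standard discrete-local reduction estimate for newest-vertex-bisection meshes. First I would observe that each element $T\in\cTh\setminus\cT$ is contained in a unique father element $T'\in\cT\setminus\cTh$, and that bisection guarantees $h_T\le 2^{-1/2}h_{T'}$ (more generally $h_T\le q\,h_{T'}$ with a fixed $q<1$ coming from the bisection rule). I would then split $\eta_T(\cTh)^2$ using the triangle inequality in $L^2$ into the part built from the discrete quantities on $\cT$ (which, after summation over children with the shrinkage factor on $h_T$, yields the $q_{red}\sum_{T'\in\cT\setminus\cTh}\eta_{T'}(\cT)^2$ term with $q_{red}<1$) and the part measuring the difference $u_{\cTh}-u_\cT$.

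The key steps, in order, are: (i) write $\lambda_{\cTh}\bu_{\cTh}+\Delta\bu_{\cTh}-\nabla p_{\cTh} = (\lambda_{\cT}\bu_{\cT}+\Delta\bu_{\cT}-\nabla p_{\cT}) + (\lambda_{\cTh}\bu_{\cTh}-\lambda_{\cT}\bu_{\cT}) + \Delta(\bu_{\cTh}-\bu_{\cT}) - \nabla(p_{\cTh}-p_{\cT})$, and analogously split the jump and divergence terms; apply $(a+b)^2\le (1+\delta)a^2+(1+\delta^{-1})b^2$ for a small parameter $\delta>0$ chosen at the end; (ii) for the first group, use $h_T\le q h_{T'}$ together with the fact that on the nonrefined-into children the volume residual, jump and divergence data restricted to $T$ coincide with those of $T'$ up to the polynomial being the same function evaluated on a subdomain — hence summing over the children of a given $T'$ reproduces $\eta_{T'}(\cT)^2$ scaled by the factor $q^2$ (for the $h_T^2$ term) or $q$ (for the $h_T$ terms), so this group is bounded by $q(1+\delta)\sum_{T'\in\cT\setminus\cTh}\eta_{T'}(\cT)^2$; (iii) for the second group, use inverse inequalities on $\cTh$ — namely $h_T^2\|\Delta(\bu_{\cTh}-\bu_{\cT})\|_{L^2(T)}^2\lesssim\|\nabla(\bu_{\cTh}-\bu_{\cT})\|_{L^2(T)}^2$, $h_T^2\|\nabla(p_{\cTh}-p_{\cT})\|_{L^2(T)}^2\lesssim\|p_{\cTh}-p_{\cT}\|_{L^2(T)}^2$, $h_T\|[\partial_n(\bu_{\cTh}-\bu_{\cT})]\|_{L^2(\partial T)}^2\lesssim\|\nabla(\bu_{\cTh}-\bu_{\cT})\|_{L^2(\omega_\cTh(T))}^2$, and $h_T\|\vdiv(\bu_{\cTh}-\bu_{\cT})|_T\|_{L^2(\partial T)}^2\lesssim\|\nabla(\bu_{\cTh}-\bu_{\cT})\|_{L^2(T)}^2$ via a discrete trace inequality; for the $h_T^2\lambda^2\|\bu_{\cTh}-\bu_{\cT}\|_{L^2}^2$-type contribution, bound $h_T\le h_{\cTh}\le h_{\cT_0}$ and use $\|\bu_{\cTh}-\bu_{\cT}\|_0\lesssim|||u_{\cTh}-u_{\cT}|||$ (and absorb the eigenvalue differences by Identity~II together with $|\lambda_{\cTh}-\lambda_{\cT}|\lesssim|||u_{\cTh}-u_{\cT}|||$). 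Summing these over all children of all $T'$ and using finite overlap of the patches $\omega_\cTh(T)$ yields $C_{red}(\delta)|||u_{\cTh}-u_{\cT}|||^2$.

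Finally I would fix $\delta$ small enough that $q(1+\delta)=:q_{red}<1$, which is possible precisely because $q<1$, and set $C_{red}:=C_{red}(\delta)$; this gives exactly \eqref{eq:redprop}.

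The main obstacle is the bookkeeping in step (ii): one must be careful that the ``data'' entering $\eta_T(\cTh)$ for a child $T$ — in particular the interior residual $\lambda_{\cT}\bu_{\cT}+\Delta\bu_{\cT}-\nabla p_{\cT}$ and the tangential/normal jump terms — are exactly the $\cT$-quantities (they are, since $\bu_{\cT},p_{\cT}$ are functions independent of the mesh and $T\subset T'$, with $\partial T\cap\Omega$ either interior to $T'$, where the jump vanishes, or lying on $\partial T'$), so that no spurious $\cTh$-dependent terms are generated and the shrinkage $h_T\le q h_{T'}$ can be cleanly pulled out. The eigenvalue-difference terms require a small additional argument via Identity~II and the a~priori bound $|\lambda_{\cTh}-\lambda_{\cT}|\lesssim|||u_{\cTh}-u_{\cT}|||$, but these are lower-order and are absorbed into $C_{red}|||u_{\cTh}-u_{\cT}|||^2$.
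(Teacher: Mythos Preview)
Your overall strategy---split each child indicator via Young's inequality into a piece built from the coarse data $(\bu_{\cT},p_{\cT},\lambda_{\cT})$ and a difference piece, then exploit the bisection shrinkage $h_T\le qh_{T'}$ on the first and inverse/trace estimates on the second---is exactly the ``standard argument'' that the paper invokes by citation to~\cite{cascon2008quasi,gantumur2014convergence,boffi2019adaptive}. In that sense you and the paper take the same route; you are simply more explicit, and your treatment of the volume residual, the normal-jump term, and the difference part in step~(iii) (including the eigenvalue terms via Identity~II) is correct.

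The gap is in step~(ii) for the third contribution $h_T\|\vdiv\bu_{\cT}|_T\|^2_{L^2(\partial T)}$. Your bookkeeping rests on the claim that on faces of a child $T\subset T'$ lying in the interior of the parent the coarse-data quantity vanishes, so that summing over children reproduces the parent contribution scaled by~$q$. This is correct for $[\partial_n\bu_{\cT}]$ (the jump of a function smooth on $T'$ across an interior face is zero) but \emph{false} for $\vdiv\bu_{\cT}|_T$, which is a trace, not a jump: $\vdiv\bu_{\cT}$ is a single polynomial on $T'$, yet its restriction to the new bisection edge is generically nonzero. The child sum therefore acquires an extra interior-face term $\sum_i h_{T_i}\|\vdiv\bu_{\cT}\|^2_{L^2(\partial T_i\setminus\partial T')}$ that your argument ignores, and a direct computation (take $\vdiv\bu_{\cT}$ constant on $T'$, one bisection, $h_T:=|T|^{1/d}$) shows the resulting factor need not be strictly below~$1$. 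Closing this requires more than the mesh-size shrinkage alone; it is precisely where the equivalences~(\ref{est-11})--(\ref{est-12}) and the specific arguments in~\cite{gantumur2014convergence,bansch2002adaptive,morin2008basic} enter, and your sketch must invoke them to handle this term.
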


\begin{proof}
Let $\cTh\in\TT$ be any refinement of the given triangulation $\cT\in\TT$.
For any $\delta>0$, using standard arguments
from~\cite{cascon2008quasi,gantumur2014convergence,boffi2019adaptive} gives
\[
\sum_{T\in\cTh}\eta_{T}(\cTh)^2\le
(1+\delta)\sum_{T\in\cT}\eta_{T}(\cT)^2
-\tau(1+\delta)\sum_{T\in\cT\setminus\cTh}\eta_{T}(\cT)^2
+C_{\delta}|||u_{\cTh}-u_\cT|||^2,
\]
where $C_\delta$ is a positive constant which may depend on $\delta$ and
$\tau$ is a positive constant which does not depend of $\delta$.
Let $\theta\in(0,1]$ be the marking parameter used in the following condition
\[
\theta\sum_{T\in\cT}\eta_{T}(\cT)^2\le\sum_{T\in\cT\setminus\cTh}\eta_{T}(\cT)^2.
\]
Hence, we obtain
\[
\sum_{T\in \cTh}\eta_{T}(\cTh)^2\le (1+\delta)(1-\tau\theta)\sum_{T\in \cT}\eta_{T}(\cT)^2+C_{\delta} |||u_{\cTh}-u_\cT|||^2.
\]
Choosing the parameter $\delta$ small enough, there exists a $\Theta$  with
$\Theta<1$ such that
\[
\sum_{T\in \cTh}\eta_{T}(\cTh)^2\le \Theta\sum_{T\in \cT}\eta_{T}(\cT)^2+C_{\delta} |||(\bu_{\cTh}-\bu_\cT,p_{\cTh}-p_\cT)|||^2.
\]
Combining the estimate with the stability on non-refined elements leads to
the desired result.
\end{proof}

\begin{proposition}[\textbf{Generalized quasi-orthogonality}]\label{prop:quasi}
The output of \textbf{Algorithm~1}
satisfies, for all $\ell$ and $N\in\NN$,
\begin{equation}\label{gqo}
\sum_{k=0}^{N}||u_{\ell+k+1}-u_{\ell+k}||^2_{\mathcal{X}} \le \widetilde{C(N)} ||u-u_\ell||^2_{\mathcal{X}}+\rho_{3}(h_{\ell})\sum_{k=0}^N ||u-u_{\ell+k}||^2_{\mathcal{X}},
\end{equation}
where $\rho_3(h)$ tends to zero as $h$ goes to zero and {$\widetilde{C(N)}=o(N)$ as $N\rightarrow \infty$. }
\end{proposition}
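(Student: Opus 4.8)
The plan is to follow the strategy of Feischl~\cite{feischl2022inf} and reduce the generalized quasi-orthogonality to a statement about the $LU$-factorization of an infinite matrix built from the bilinear forms, but with the eigenvalue problem treated as a compact perturbation of the Stokes source problem. First I would recall the Galerkin-type orthogonality available here: since $\cX^p_{\ell}\subset\cX^p_{\ell+1}$ and both $u_\ell$ and $u_{\ell+1}$ solve the discrete eigenproblem~\eqref{disform11} on their respective meshes, the difference $u_{\ell+1}-u_\ell$ satisfies $B(u_{\ell+1}-u_\ell,v)=\lambda_{\ell+1}(\bu_{\ell+1},\bv)-\lambda_\ell(\bu_\ell,\bv)$ for all $v\in\cX^p_\ell$, i.e. the residual of $u_{\ell+1}$ tested against the coarser space is not zero but is controlled by the eigenvalue/eigenfunction discrepancy. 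Using Lemma~\ref{stab11} and Lemma (inequality~\eqref{cont11}) to get the discrete inf-sup and boundedness of $B$, together with Identity~II (Theorem~\ref{algideii}) to express $\lambda_\ell-\lambda_{\ell+1}$ in terms of $|||u_{\ell+1}-u_\ell|||^2$ and the $L^2$ term, I would derive a \emph{perturbed} orthogonality of the form
\begin{align*}
\bigl|B(u_{\ell+k+1}-u_{\ell+k},\,u-u_{\ell+k})\bigr|\le \rho(h_\ell)\,|||u-u_{\ell+k}|||\,\bigl(|||u-u_{\ell+k}|||+|||u-u_{\ell+k+1}|||\bigr),
\end{align*}
where the right-hand side is small because $\rho(h_\ell)\to0$ by inequality~\eqref{l2h1ineq} (the $L^2$–$H^1$ gain) and the a priori eigenvalue convergence quoted from~\cite{babuvska1989finite,boffi2010finite}.

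Next I would set up the matrix formalism. Writing $e_j = u-u_j$ and $d_j = u_{j+1}-u_j$, so that $d_j = e_j - e_{j+1}$, one assembles the infinite lower-triangular array with entries essentially $B(d_i,e_j)$ (normalized by the norms) and invokes the abstract $LU$-factorization lemma of~\cite{feischl2022inf}: if the off-diagonal entries decay in the appropriate summable sense — which is exactly what the perturbed orthogonality above provides, since $B(d_i,e_j)$ for $i<j$ is of size $\rho(h_\ell)$ relative to the diagonal — then the $LU$-factorization exists with controlled factors, and this yields precisely the bound $\sum_{k=0}^N\|d_{\ell+k}\|^2_{\cX}\le \widetilde{C(N)}\|e_\ell\|^2_{\cX}+\rho_3(h_\ell)\sum_{k=0}^N\|e_{\ell+k}\|^2_{\cX}$ with $\widetilde{C(N)}=o(N)$. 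The constant $\widetilde{C(N)}$ grows sublinearly precisely because the near-orthogonality only fails by the summable perturbation $\rho(h_\ell)$; in the genuinely orthogonal case one would get $\widetilde{C(N)}$ bounded, and the $o(N)$ is the price of the compact (eigenvalue) perturbation, mirroring exactly the non-symmetric situation handled in~\cite{feischl2022inf}.

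The main obstacle will be the second step: verifying that the perturbation terms coming from $\lambda_\ell-\lambda_{\ell+k}$ and $\lambda\|\bu-\bu_{\ell+k}\|_0$ are uniformly summable in a way compatible with the hypotheses of the abstract $LU$-lemma, rather than merely small for each fixed pair of indices. Concretely, one needs that $\sum_{k}\rho(h_{\ell+k})$ or an analogous telescoping quantity stays controlled; here I would exploit that the meshes are nested and produced by newest-vertex bisection, so $h_{\ell+k}\le h_\ell$ and the a priori rates force geometric-type decay of $\|u-u_{\ell+k}\|_\cX$ once $h_\ell$ is small enough — this is where the hypothesis that $\theta$ (and hence the initial mesh size, via the plain convergence already built into the axioms framework of~\cite{carstensen2014axioms,boffi2019adaptive}) is sufficiently small enters. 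A secondary technical point is bookkeeping the equivalence between $\|\cdot\|_\cX$ and $|||\cdot|||$ and keeping track of which constants may depend on $\lambda$; these are routine once the first two steps are in place. Assembling these ingredients gives~\eqref{gqo}.
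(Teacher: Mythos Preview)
Your route differs substantially from the paper's, and it has a real gap. The paper does \emph{not} build the $LU$-matrix directly from the eigenvalue problem. Instead it introduces the continuous and discrete Stokes \emph{source} solution operators $\mathbf{T},\mathbf{T}_k$ (mapping $\bm f$ to the velocity--pressure pair), writes $u_{\ell+k}=\lambda_{\ell+k}\mathbf{T}_{\ell+k}u_{\ell+k}$, and decomposes
\[
u_{\ell+k+1}-u_{\ell+k}=(\lambda_{\ell+k+1}-\lambda_{\ell+k})\mathbf{T}_{\ell+k+1}u_{\ell+k+1}
+\lambda_{\ell+k}(\mathbf{T}_{\ell+k+1}-\mathbf{T}_{\ell+k})(u_{\ell+k+1}-u)
+\lambda_{\ell+k}(\mathbf{T}_{\ell+k+1}-\mathbf{T}_{\ell+k})u
+\lambda_{\ell+k}\mathbf{T}_{\ell+k}(u_{\ell+k+1}-u_{\ell+k}).
\]
Only the third term involves the telescoping increments of Galerkin solutions of the \emph{source problem} with fixed data $\bu$; to that term the paper applies Feischl's source-problem quasi-orthogonality \cite[Eq.~8]{feischl2022inf} as a black box, producing $C(N)\|(\mathbf T-\mathbf T_\ell)u\|_\cX^2$, which is then converted to $\widetilde{C(N)}\|u-u_\ell\|_\cX^2$. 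The remaining three terms are each bounded by $\rho(h_\ell)$-small multiples of $\|u-u_{\ell+k}\|_\cX^2$ via Theorems~\ref{algide}--\ref{algideii}, the $L^2$--$H^1$ gain~\eqref{l2h1ineq}, and the discrete inf-sup, and they land directly in the second summand of~\eqref{gqo}. No summability of $\rho(h_{\ell+k})$ is ever needed: a single factor $\rho_3(h_\ell)$ in front of $\sum_k\|u-u_{\ell+k}\|_\cX^2$ is exactly what the statement allows.

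Your plan, by contrast, feeds the perturbed-orthogonality relation into the $LU$-machinery for the eigenvalue matrix itself. The obstacle you flag is genuine and not resolved: Feischl's $LU$-lemma is proved under exact Galerkin orthogonality (strict block-triangular structure), and to accommodate nonzero upper blocks you would need either a perturbed version of that lemma or the summability $\sum_k\rho(h_{\ell+k})<\infty$. Your proposed cure---``a priori rates force geometric-type decay of $\|u-u_{\ell+k}\|_\cX$''---is circular: geometric decay on the adaptive sequence is a \emph{consequence} of linear convergence, which in turn rests on the very quasi-orthogonality you are trying to prove (see Lemmas~\ref{lem-lin11}--\ref{lem-lin12}). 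A minor additional issue is that your displayed bound tests $B(u_{\ell+k+1}-u_{\ell+k},\cdot)$ against $u-u_{\ell+k}$, which is not in any discrete space, so the discrete equations cannot be invoked there. The solution-operator splitting sidesteps all of this by isolating a pure source-problem piece to which \cite{feischl2022inf} applies verbatim.
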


\begin{proof}
{
By leveraging the well-posedness of the continuous source problem
 \begin{align}\label{sourcepre12}
 \begin{split}
 -\Delta \bm{u}^{\bm{f}}+\nabla p^{\bm{f}} &= \bm{f}\quad \mbox{in}\;\Omega,  \\
\nabla\cdot \bm{u}^{\bm{f}}&=0\quad \;\;\;\mbox{in}\;\Omega,\\
 \bm{u}^{\bm{f}}&=\bm{0}\quad \;\;\;\mbox{on}\;\partial\Omega,
 \end{split}
\end{align}  
with the compatibility relation
\begin{align*}
\int_{\Omega} p^{\bm{f}}\;dx=0,
\end{align*}
 the operators $T:\bm{L}^2(\Omega)\rightarrow\bm{H}^1_0(\Omega)$ and $S:\bm{L}^2(\Omega)\rightarrow L^2_0(\Omega)$
are well defined for any $\bm{f}\in\bm{L}^2(\Omega)$, where
$T\bm{f}=\bm{u}^{\bm{f}}$ and $S\bm{f}=p^{\bm{f}}$ 
represent the velocity and pressure components, respectively, of the solution
to problem \eqref{sourcepre12}.}
\par
{For given $\bm{f}\in \bm{L}^2(\Omega)$, $\textbf{T}\bm{f}:=(T\bm{f},
S\bm{f})\in \bm{H}_0^1(\Omega)\times L^2_0(\Omega)$ is the solution $(\bu^{\bm{f}},p^{\bm{f}})$ of
\begin{align}\label{nmsprob12source}
&a(\bu^{\bm{f}},\bv)+b(\bv,p^{\bm{f}})=(\bm{f},\bv)&&\forall\bv\in\bV\\
&b(\bu^{\bm{f}},q)=0&&\forall q\in Q.
\end{align}
Given that the discrete source problem is also well-posed, we similarly define the operators
$T_h:\bm{L}^2(\Omega)\rightarrow\bm{V}_h$ and $S_h:\bm{L}^2(\Omega)\rightarrow Q_h$, where
$T_{h}\bm{f}=\bm{u}_h^{\bm{f}}$ and $S_h\bm{f}=p_h^{\bm{f}}$ denote the discrete velocity and discrete pressure approximations, respectively.
For given $\bm{f}\in \bm{L}^2(\Omega)$, $\textbf{T}_h\bm{f}:=(T_h\bm{f},
S_h\bm{f})\in\bV_h\times Q_h$ is the solution $(\bu_h^{\bm{f}},p_h^{\bm{f}})$ of
\begin{align}\label{nmsprob12sourcedis}
&a(\bu_h^{\bm{f}},\bv)+b(\bv,p_h^{\bm{f}})=(\bm{f},\bv)&&\forall\bv\in\bV_h\\
&b(\bu_h^{\bm{f}},q)=0&&\forall q\in Q_h.
\end{align}
Let $\lambda$ denote the eigenvalue associated with the eigenfunction $\bm{u}$. Then, the following relationships hold:
\begin{align*}
\bm{u}_{\ell+k+1}-\bm{u}_{\ell+k}&=\lambda_{\ell+k+1} T_{\ell+k+1}\bm{u}_{\ell+k+1}-\lambda_{\ell+k} T_{\ell+k}\bm{u}_{\ell+k}\\
&=(\lambda_{\ell+k+1}-\lambda_{\ell+k})T_{\ell+k+1}\bm{u}_{\ell+k+1}+\lambda_{\ell+k}(T_{\ell+k+1}-T_{\ell+k})\bm{u}_{\ell+k+1}\\
&\quad+\lambda_{\ell+k} T_{\ell+k}(\bm{u}_{\ell+k+1}-\bm{u}_{\ell+k})\\
&=(\lambda_{\ell+k+1}-\lambda_{\ell+k})T_{\ell+k+1}\bm{u}_{\ell+k+1}+\lambda_{\ell+k}(T_{\ell+k+1}-T_{\ell+k})(\bm{u}_{\ell+k+1}-\bm{u})\\
&\quad+\lambda_{\ell+k}(T_{\ell+k+1}-T_{\ell+k})\bm{u}+\lambda_{\ell+k} T_{\ell+k}(\bm{u}_{\ell+k+1}-\bm{u}_{\ell+k})
\end{align*}
and 
\begin{align*}
p_{\ell+k+1}-p_{\ell+k}&=(\lambda_{\ell+k+1}-\lambda_{\ell+k})S_{\ell+k+1}\bm{u}_{\ell+k+1}+\lambda_{\ell+k}(S_{\ell+k+1}-S_{\ell+k})(\bm{u}_{\ell+k+1}-\bm{u})\\
&\quad+\lambda_{\ell+k}(S_{\ell+k+1}-S_{\ell+k})\bm{u}+\lambda_{\ell+k} S_{\ell+k}(\bm{u}_{\ell+k+1}-\bm{u}_{\ell+k}).
\end{align*}
Let $\textbf{T}u =(T\bm{u},S\bm{u})$ and $\textbf{T}_{\ell+k}u_{\ell+k} =(T_{\ell+k}\bm{u}_{\ell+k},S_{\ell+k}\bm{u}_{\ell+k})$. Then,
it follows that
\begin{align*}
||{u}_{\ell+k+1}-{u}_{\ell+k}&||_{\cX}^2\le\todo{3}(  |\lambda_{\ell+k+1}-\lambda_{\ell+k}|^2 ||\textbf{T}_{\ell+k+1}{u}_{\ell+k+1}||_{\cX}^2\\
&\quad+\lambda_{\ell+k}^2||{(\textbf{T}_{\ell+k+1}-\textbf{T}_{\ell+k})({u}-u_{\ell+k+1})}||_{\cX}^2\\
&\quad+\lambda_{\ell+k}^2||{(\textbf{T}_{\ell+k+1}-\textbf{T}_{\ell+k}){u}}||_{\cX}^2+\lambda_{\ell+k}^2||{\textbf{T}_{\ell+k}({u}_{\ell+k+1}-{u}_{\ell+k})}||_{\cX}^2).
\end{align*}
Hence, we obtain
\begin{align*}
\sum_{k=0}^{N}||{u}_{\ell+k+1}-{u}_{\ell+k}&||_{\cX}^2\le (I) + (II) + \todo{3}\sum_{k=0}^N \lambda_{\ell+k}^2||{(\textbf{T}_{\ell+k+1}-\textbf{T}_{\ell+k}){u}}||_{\cX}^2 + (III),
\end{align*}
\todo{where
\begin{align*}
(I)&:= \todo{3} \sum_{k=0}^N |\lambda_{\ell+k+1}-\lambda_{\ell+k}|^2 ||\textbf{T}_{\ell+k+1}{u}_{\ell+k+1}||_{\cX}^2,\\
(II)&:=\todo{3} \sum_{k=0}^N\lambda_{\ell+k}^2||{(\textbf{T}_{\ell+k+1}-\textbf{T}_{\ell+k})({u}-u_{\ell+k+1})}||_{\cX}^2,\\
(III)&:=\todo{3}\sum_{k=0}^N\lambda_{\ell+k}^2||{\textbf{T}_{\ell+k}({u}_{\ell+k+1}-{u}_{\ell+k})}||_{\cX}^2.
\end{align*}}
Using the quasi-orthogonality estimate from the source problem discussed in \cite[Eq. 8]{feischl2022inf} yields 
\begin{align*}
\todo{3}\sum_{k=0}^N \lambda_{\ell+k}^2&||{(\textbf{T}_{\ell+k+1}-\textbf{T}_{\ell+k}){u}}||_{\cX}^2 \\
\le &C(N) \bar{\lambda}_{\ell}^2 ||{(\textbf{T}-\textbf{T}_{\ell}){u}}||_{\cX}^2\\
\le & C(N)\bar{\lambda}_{\ell}^2 ((1+c)||{\textbf{T}u-\textbf{T}_{\ell}{u}_{\ell}}||_{\cX}^2+(1+1/c) ||{\textbf{T}_{\ell}(u-{u}_\ell})||_{\cX}^2),
\end{align*}
where $c\in (0,1]$, $\bar{\lambda}_{\ell}^2=\max_{0\le k \le N}\lambda_{\ell+k}^2$ and ${C(N)}=o(N)$ as $N\rightarrow \infty$.\\
Using the discrete inf-sup (\ref{eq24}) and the definition of the weak source problem with the force function $f=\bm{u}-\bm{u}_{\ell}$ and then applying the Cauchy-Schwartz with the bound from~\eqref{l2h1ineq} implies that
\begin{align}\label{disbound11}
C_2 ||{\textbf{T}_{\ell}(u-{u}_\ell})||_{\cX}^2\le B({\textbf{T}_{\ell}(u-{u}_\ell}),v)&=(\bu-\bu_{\ell}, \bm{v})\nonumber\\
 &\le \rho(h_{\ell}) ||{(u-{u}_\ell})||_{\cX} C_1 ||{\textbf{T}_{\ell}(u-{u}_\ell})||_{\cX}, 
\end{align}
where $\rho(h)$ tends to zero as $h$ goes to zero.\\
From the estimate above and the definitions  ${u}=\lambda T u$ and ${u}_\ell=\lambda_\ell T_\ell u_\ell$, together with  the bound from~\eqref{l2h1ineq}, it follows that
\begin{align*}
\todo{3}\sum_{k=0}^N \lambda_{\ell+k}^2&||{(\textbf{T}_{\ell+k+1}-\textbf{T}_{\ell+k}){u}}||_{\cX}^2 \\
\le & C(N) \bar{\lambda}_{\ell}^2((1+c)||{u/\lambda-{u}_{\ell}/\lambda_{\ell}}||_{\cX}^2+(1+1/c) \rho_2(h_\ell)||{u-{u}_\ell}||_{\cX}^2)\\
\le & C(N)((1+c)c_1(|\lambda-\lambda_\ell|^2/\lambda^2||u||_{\cX}^2+||{u-{u}_{\ell}}||_{\cX}^2)+(1+1/c) \bar{\lambda}_{\ell}^2\rho_2(h_\ell)||{u-{u}_\ell}||_{\cX}^2)\\
\le & C(N)((1+c)c_1(1+\rho_{2}(h_\ell)^2||\textbf{T}u||_{\cX}^2)||{u-{u}_{\ell}}||_{\cX}^2+(1+1/c) \bar{\lambda}_{\ell}^2\rho_2(h_\ell)||{u-{u}_\ell}||_{\cX}^2)\\
=& \widetilde{C(N)} ||{u-{u}_{\ell}}||_{\cX}^2,
\end{align*}
where
$\widetilde{C(N)}=C(N)((1+c)c_1(1+\rho_{2}(h_\ell)^2||\textbf{T}u||_{\cX}^2)+(1+1/c)
\bar{\lambda}_{\ell}^2\rho_2(h_\ell))$ with
$c_1=\bar{\lambda}_{\ell}^2/\lambda_\ell^2$ and where $\rho_2(h)$ tends to zero as $h$ goes to zero.
The identities provided in Theorems~\ref{algide} and~\ref{algideii} 
along with the bound from~\eqref{l2h1ineq} finally give:
\begin{align*}
(I)&\le\rho_{2}(h_{\ell})^2\sum_{k=0}^N ||u-u_{\ell+k}||^2_{\mathcal{X}}||\textbf{T}_{\ell+k+1}{u}_{\ell+k+1}||_{\cX}^2
\end{align*}
\todo{Using 
\[||{(\textbf{T}_{\ell+k+1}-\textbf{T}_{\ell+k})({u}-u_{\ell+k+1})}||_{\cX} \le ||{(\textbf{T}_{\ell+k+1}-\textbf{T}_{\ell+k})||_{\mathcal{L}(\cX,\cX)} ||({u}-u_{\ell+k+1})}||_{\cX}\]
implies the following bound}
 \begin{align*}
(II) &\le \bar{\lambda}_{\ell}^2\rho_{2}(h_{\ell})^2\sum_{k=0}^N ||u-u_{\ell+k}||^2_{\mathcal{X}}
\end{align*}
\todo{Analogous to the bound in (\ref{disbound11}), by employing the discrete inf-sup condition (\ref{eq24}), invoking the definition of the weak source problem with the forcing term $f=\bm{u}_{\ell+k+1}-\bm{u}_{\ell+k}$, and applying the Cauchy–Schwarz inequality together with the bound from~\eqref{l2h1ineq}, we derive the following estimate}
\begin{align*}
(III)&\le \bar{\lambda}_{\ell}^2\rho_{2}(h_{\ell})^2\sum_{k=0}^N ||u-u_{\ell+k}||^2_{\mathcal{X}}.
\end{align*}
Combining the above estimates leads to the stated result.}
\end{proof}
\begin{proposition}[\textbf{Discrete reliability}]
For all {refinements}~$\cTh\in\TT$ of a triangulation $\cT\in\TT$, there exists a
subset $\cR(\cT,\cTh)\subseteq\cT$ with
$\cT\setminus\cTh\subseteq\cR(\cT,\cTh)$ and
$|\cR(\cT,\cTh)|\le C_{ref}|\cT\setminus\cTh|$ such that
\begin{equation}
|||u_{\cTh}-u_{\cT}|||\le C\left(\sum_{T\in \cR(\cT,\cTh)}\eta_{T}^2\right)^{1/2}
+\rho_1(h_{\cT})(|||u-u_\cT|||+|||u-u_{\cTh}|||).
\label{eq:drel}
\end{equation}
\end{proposition}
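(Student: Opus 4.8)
The plan is to adapt the standard discrete-reliability argument (cf.\ \cite{cascon2008quasi,carstensen2014axioms,feischl2019optimality}) to the eigenvalue setting. Write $e:=u_{\cTh}-u_{\cT}$; since $\cX^p_{\cT}\subseteq\cX^p_{\cTh}$ we have $e\in\cX^p_{\cTh}$, and the discrete stability of Lemma~\ref{stab11} applied on the fine mesh produces $v\in\cX^p_{\cTh}$ with $|||v|||\le C_1|||e|||$ and $C_2|||e|||^2\le B(e,v)$. Hence it is enough to bound $B(e,v)$ and divide by $|||e|||$ at the end.

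I would then fix a componentwise quasi-interpolation operator $I_{\cT}\colon\cX^p_{\cTh}\to\cX^p_{\cT}$ (a Scott--Zhang type operator for the $\bH^1_0$ velocity and for the $H^1$-conforming, zero-mean pressure) with the usual properties: $H^1$-stability $|||I_{\cT}v|||\le C|||v|||$, the local first-order estimate $\|\boldsymbol{w}\|_{L^2(T)}\le Ch_T|||v|||_{\omega(T)}$ for $w:=v-I_{\cT}v$ together with the analogous trace estimate on edges, and---crucially---the locality property that $w$ vanishes on every $T\in\cT$ whose patch $\omega(T)$ is contained in $\cT\cap\cTh$ (on such a patch $v$ is already a coarse-mesh function, so $I_{\cT}$ reproduces it). This motivates the choice $\cR:=\cR(\cT,\cTh):=\{T\in\cT:\omega(T)\not\subseteq\cT\cap\cTh\}$, for which $\cT\setminus\cTh\subseteq\cR$ is immediate, and $|\cR|\le C_{ref}|\cT\setminus\cTh|$ follows from shape-regularity and the finite overlap of newest-vertex-bisection patches; moreover both $\boldsymbol{w}$ and the pressure component $q$ of $w$ are supported in $\bigcup\cR$.

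Next I would split $B(e,v)=B(e,I_{\cT}v)+B(e,w)$. For the first term, since $I_{\cT}v\in\cX^p_{\cT}\subseteq\cX^p_{\cTh}$, the discrete equations~\eqref{disform11} on both meshes give $B(e,I_{\cT}v)=\lambda_{\cTh}(\bu_{\cTh}-\bu_{\cT},I_{\cT}\bv)+(\lambda_{\cTh}-\lambda_{\cT})(\bu_{\cT},I_{\cT}\bv)$; using $\|\bu_{\cTh}-\bu_{\cT}\|_0\le\|\bu-\bu_{\cT}\|_0+\|\bu-\bu_{\cTh}\|_0$ with~\eqref{l2h1ineq}, the bound $|\lambda_{\cTh}-\lambda_{\cT}|\le|\lambda-\lambda_{\cT}|+|\lambda-\lambda_{\cTh}|\le C(|||u-u_{\cT}|||^2+|||u-u_{\cTh}|||^2)$ from Theorems~\ref{algide} and~\ref{algideii}, $\|\bu_{\cT}\|_0=1$ and $\|I_{\cT}\bv\|_0\le C|||v|||$, this contribution is bounded by $\rho_1(h_{\cT})(|||u-u_{\cT}|||+|||u-u_{\cTh}|||)|||v|||$ once $h_{\cT}$ is small enough (the a priori error $|||u-u_{\cT}|||$ vanishes as $h_{\cT}\to0$, and $|||u-u_{\cTh}|||\le C|||u-u_{\cT}|||$ by quasi-optimality). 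For $B(e,w)$, I would use $B(u_{\cTh},w)=\lambda_{\cTh}(\bu_{\cTh},\boldsymbol{w})$ and integrate elementwise by parts over $\cT$---the pressure jumps disappear because $Q_{\cT}^{p-1}\subset H^1$, and boundary edges drop out since $\boldsymbol{w}\in\bH^1_0$---to obtain, with $\boldsymbol{w},q$ supported in $\bigcup\cR$,
\[
B(e,w)=\sum_{T\in\cR}\int_T(\lambda_{\cT}\bu_{\cT}+\Delta\bu_{\cT}-\nabla p_{\cT})\cdot\boldsymbol{w}-\sum_{e}\int_e[\partial_n\bu_{\cT}]\cdot\boldsymbol{w}+(\vdiv\bu_{\cT},q)+(\lambda_{\cTh}\bu_{\cTh}-\lambda_{\cT}\bu_{\cT},\boldsymbol{w}).
\]
The first two sums are bounded by $C\,\eta(\cR)\,|||v|||$ via the local approximation estimates for $w$ and finite overlap---they reproduce the first two terms of $\eta_T^2$---while the last term, written as $\lambda_{\cTh}(\bu_{\cTh}-\bu,\boldsymbol{w})+(\lambda_{\cTh}-\lambda)(\bu,\boldsymbol{w})+\lambda(\bu-\bu_{\cT},\boldsymbol{w})+(\lambda-\lambda_{\cT})(\bu_{\cT},\boldsymbol{w})$, is controlled exactly like $B(e,I_{\cT}v)$ and yields a further $\rho_1(h_{\cT})(|||u-u_{\cT}|||+|||u-u_{\cTh}|||)|||v|||$. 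For the divergence term I would exploit the discrete incompressibility $b(\bu_{\cT},\cdot)=0$ on $Q_{\cT}^{p-1}$, the $L^2$-stability of the pressure interpolation, the identity $[\vdiv\bu_{\cT}]=[\partial_n\bu_{\cT}]\cdot\nn$, and the localized counterpart of~\eqref{est-11}--\eqref{est-12}, to estimate $\|\vdiv\bu_{\cT}\|_{L^2(\bigcup\cR)}$ by the jump quantities $h_e^{1/2}\|[\partial_n\bu_{\cT}]\|_{L^2(e)}$ on the edges near $\cR$, giving once more $C\,\eta(\cR)\,|||v|||$. Collecting everything, dividing by $|||e|||$ and using $|||v|||\le C_1|||e|||$ produces~\eqref{eq:drel}.

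The main obstacle is the divergence/pressure contribution $(\vdiv\bu_{\cT},q)$: because the pressure test function carries no mesh-size weight (it lies only in $L^2$), it cannot be absorbed by a weighted interpolation estimate, so one must trade the volume quantity $\|\vdiv\bu_{\cT}\|_{L^2(T)}$ for the edge-jump terms actually present in $\eta_T$, which is precisely where the algebraic inequalities~\eqref{est-11}--\eqref{est-12}, used in localized form, and the discrete divergence constraint come in. A secondary, eigenvalue-specific difficulty is the absence of exact Galerkin orthogonality between $u_{\cT}$ and $u_{\cTh}$; this is handled through Theorems~\ref{algide} and~\ref{algideii}, which show that the eigenvalue and $L^2$-eigenfunction mismatches are quadratically small and can therefore be hidden in the perturbation $\rho_1(h_{\cT})(|||u-u_{\cT}|||+|||u-u_{\cTh}|||)$.
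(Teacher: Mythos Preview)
Your proof is correct and follows essentially the same route as the paper: discrete inf-sup stability on the fine mesh, a Scott--Zhang type interpolant whose defect vanishes on unrefined patches, elementwise integration by parts to expose the residual and jump terms, and Theorems~\ref{algide}--\ref{algideii} together with~\eqref{l2h1ineq} to absorb the eigenvalue-specific terms $\lambda_{\cTh}\bu_{\cTh}-\lambda_{\cT}\bu_{\cT}$ into the higher-order perturbation. Your treatment of the pressure/divergence contribution---trading $\|\vdiv\bu_{\cT}\|_{L^2(T)}$ for the edge quantities via a localized form of~\eqref{est-11}--\eqref{est-12}---is in fact more explicit than the paper's, which leaves this step implicit after displaying the bound $\sum_T\|q_{\cTh}-q_{\cT}\|_{L^2(T)}\|\vdiv\bu_{\cT}\|_{L^2(T)}$.
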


\begin{proof}

For any $(\bv_{\cT},q_{\cT})\in\bV_{\cT}\times Q_\cT$ and
$(\bv_{\cTh},q_{\cTh})\in\bV_{\cTh}\times Q_{\cTh}$, the discrete weak
formulation gives 
\[
\aligned
&a(\bu_{\cTh}-\bu_{\cT},\bv_{\cTh}){+}b(\bv_{\cTh},p_{\cTh}-p_{\cT}){+}b(\bu_{\cTh}-\bu_{\cT},q_{\cTh})\\
&\quad=a(\bu_{\cTh}-\bu_{\cT},\bv_{\cTh}-\bv_{\cT}){+}b(\bv_{\cTh}-\bv_{\cT},p_{\cTh}-p_{\cT}){+}b(\bu_{\cTh}-\bu_{\cT},q_{\cTh}-q_{\cT})\\
&\qquad+(\lambda_{\cTh}\bu_{\cTh}-\lambda_{\cT}\bu_{\cT},v_{\cT})\\
&\quad=(\lambda_{\cTh}\bu_{\cTh},\bv_{\cTh}-\bv_{\cT})-(a(\bu_{\cT},\bv_{\cTh}-\bv_{\cT})+b(\bv_{\cTh}-\bv_{\cT},p_{\cT})+b(\bu_{\cT},q_{\cTh}-q_{\cT}))\\
&\qquad+(\lambda_{\cTh}\bu_{\cTh}-\lambda_{\cT}\bu_{\cT},\bv_{\cT})\\
&\quad=(\lambda_{\cT}\bu_{\cT},\bv_{\cTh}-\bv_{\cT})-(a(\bu_{\cT},\bv_{\cTh}-\bv_{\cT})+b(\bv_{\cTh}-\bv_{\cT},p_{\cT})+b(\bu_{\cT},q_{\cTh}-q_{\cT}))\\
&\qquad+(\lambda_{\cTh}\bu_{\cTh}-\lambda_{\cT}\bu_{\cT},\bv_{\cTh})+(\lambda_{\cTh}\bu_{\cTh}-\lambda_{\cT}\bu_{\cT},\bv_{\cTh}-\bv_{\cT}).
\endaligned
\]
An application of integration by parts implies that
\[
\aligned
&a(\bu_{\cTh}-\bu_{\cT},\bv_{\cTh}){+}b(\bv_{\cTh},p_{\cTh}-p_{\cT}){+}b(\bu_{\cTh}-\bu_{\cT},q_{\cTh})\\
&\quad=(\lambda_{\cT}\bu_{\cT},\bv_{\cTh}-\bv_{\cT})-(a(\bu_{\cT},\bv_{\cTh}-\bv_{\cT})+b(\bv_{\cTh}-\bv_{\cT},p_{\cT})+b(\bu_{\cT},q_{\cTh}-q_{\cT}))\\
&\qquad+(\lambda_{\cTh}\bu_{\cTh}-\lambda_{\cT}\bu_{\cT},\bv_{\cTh})+(\lambda_{\cTh}\bu_{\cTh}-\lambda_{\cT}\bu_{\cT},\bv_{\cTh}-\bv_{\cT})\\
&\quad=\sum_{T\in\cT}\int_T(\lambda_{\cT}\bu_{\cT}+\Delta\bu_{\cT}-\nabla p_{\cT})\cdot (\bv_{\cTh}-\bv_{\cT}) - \sum_{E\in\cT}\int_E [\partial_n\bu_{\cT}]\cdot(\bv_{\cTh}-\bv_{\cT})\\
&\qquad+\int_{\Omega}(q_{\cTh}-q_{\cT})\nabla\cdot \bu_{\cT}
+(\lambda_{\cTh}\bu_{\cTh}-\lambda_{\cT}\bu_{\cT},\bv_{\cT}).
\endaligned
\]
Next we use the notation $\omega$ for the interior of the region covered by
the refined triangles, i.e. $\omega=int\cup_{T\in \cT\setminus \cTh}\bar{T}$.
Now we set $(\bv_{\cT},q_{\cT})$ to be equal to $(\bv_{\cTh}, q_{\cTh})$ in
$\Omega\setminus \omega$ and equal to the Scott--Zhang interpolator of
$(\bv_{\cTh},q_{\cTh})$ in $\omega$.
Then, we have
\begin{align*}
a(&\bu_{\cTh}-\bu_{\cT},\bv_{\cTh}){+}b(\bv_{\cTh},p_{\cTh}-p_{\cT}){+}b(\bu_{\cTh}-\bu_{\cT},q_{\cTh})\\
&\le \sum_{T\in\cT\setminus\cTh}||\lambda_{\cT}\bu_{\cT}+\Delta\bu_{\cT}-\nabla p_{\cT}||_{L^2(T)} ||\bv_{\cTh}-\bv_{\cT}||_{L^2(T)} \\
&+ \sum_{E\in\cT\setminus \cTh}|| [\partial_n\bu_{\cT}]||_{L^2(E)}||\bv_{\cTh}-\bv_{\cT}||_{L^2(E)}\\
&+ \sum_{T\in\cT\setminus\cTh}||q_{\cTh}-q_{\cT}||_{L^2(T)} ||\nabla\cdot \bu_{\cT}||_{L^2(T)}\\ 
&+ \sum_{T\in\cT\setminus\cTh}||\lambda_{\cTh}\bu_{\cTh}-\lambda_{\cT}\bu_{\cT}||_{L^2(T)} ||\bv_{\cTh}-\bv_{\cT}||_{L^2(T)}\\
&\quad+
\sum_{T\in\cT}||\lambda_{\cTh}\bu_{\cTh}-\lambda_{\cT}\bu_{\cT}||_{L^2(T)}
||\bv_{\cTh}||_{L^2(T)}.
\end{align*}
By Theorem \ref{algide} and Theorem \ref{algideii}, we get the following identities:
\begin{align*}
\lambda_\cT -\lambda&= a(\bu-\bu_{\cT},\bu-\bu_{\cT}){+}b(\bu-\bu_{\cT},p-p_{\cT}){+}b(\bu-\bu_{\cT},p-p_{\cT})\\
&\quad-\lambda||\bu-\bu_{\cT}||_0^2,\\
\lambda_\cT-\lambda_{\cTh}&= a(\bu_{\cTh}-\bu_{\cT},\bu_{\cTh}-\bu_{\cT}){+}b(\bu_{\cTh}-\bu_{\cT},p_{\cTh}-p_{\cT}){+}b(\bu_{\cTh}-\bu_{\cT},p_{\cTh}-p_{\cT})\\
&\quad-\lambda_{\cTh}||\bu_{\cTh}-\bu_{\cT}||_0^2.
\end{align*}
Using the error estimates for $|||(\bu-\bu_\cT,p-p_\cT)|||$ and $||\bu-\bu_\cT||_{0}$ with (\ref{l2h1ineq}), it follows that 
\begin{align}\label{enq11}
|\lambda_\cT-\lambda_{\cTh}|+||\bu_{\cTh}-\bu_{\cT}||_0\le \rho_1(h_{\cT}) |||(\bu_{\cTh}-\bu_\cT,p_{\cTh}-p_\cT)|||,
\end{align}
where $\rho_1(h_{\cT})$ tends to zero as $h_{\cT}$ tends to zero.\\
Since $(\bu_{\cTh}-\bu_{\cT},p_{\cTh}-p_{\cT})\in \bV_{\cTh} \times Q_{\cTh}$,
using the stability estimate~\eqref{stab11} with~\eqref{enq11} leads to the
stated result.
\end{proof}

\begin{remark}\label{remark1}
By \cite{feischl2022inf}, we know that the block $LU$ factorization  of the matrix $A$ satisfies the following properties 
\begin{align*}
||U||_2\le C_1 N^{1/2-\delta},\quad ||U^{-1}(:,j)||_2\le C_2,
\end{align*}
where $C_1$ is a positive constant depending on $C_a$ and $\gamma$, and $C_2$
is a positive constant depending on $\gamma$. This implies 
\begin{align*}
C(N)\le C_3 N^{1-2\delta},
\end{align*}
where $C_3$ is the positive constant depending on $C_1$ and $C_2$.
\end{remark}

\subsection{Linear convergence}
In this section, we establish the linear convergence of our estimator by leveraging the quasi-orthogonality discussed in Proposition \ref{prop:quasi}. The attainment of rate-optimality relies on the pivotal role played by linear convergence in the proof.
\begin{lemma}\label{lem-lin11}
Let the proposed estimator $\eta_\ell$ satisfy the following reduction property and reliability bound
\begin{align}
\eta(\cT_{\ell+1})^2&\le \kappa \eta(\cT_{\ell})^2+C_{est} ||u_{{\ell+1}}-u_{{\ell}}||^2_{\mathcal{X}},\label{red1}\\
||u-u_{\ell}||_{\mathcal{X}}&\le C_{rel}\eta(\cT_{\ell}) + \rho_1(h_{\ell}) ||u-u_\ell||_{\mathcal{X}},\label{rel1}
\end{align} 
for all $\ell\in \mathbb{N}$ and some parameter $0<{\kappa<1}$ with $C_{est}>0$. If the quasi orthogonality holds then 
\begin{align}\label{quasilem1}
\sum_{k=l}^{k=l+{N}}(\eta(\cT_{k})^2+\beta||u-u_{k}||^2_{\mathcal{X}})\le D(N) \eta^2(\cT_\ell)+ \beta||u-u_{\ell}||^2_{\mathcal{X}}, \quad\forall \ell,N\in\mathbb{N}
\end{align}
where $D(N)= 1+(\widetilde{\kappa}+C_{est}(1+2\epsilon C_{rel}^2)C(N-1)2C_{rel}^2)/(1-\widetilde{\kappa})$, $ \beta=(\epsilon-{2\epsilon\rho_1(h_\ell)^2}-\rho_{2}(h_{\ell}))/(1-\widetilde{\kappa})$ {with $\widetilde{\kappa}=\kappa(1+2\epsilon C_{rel}^2)<1$ for small enough $\epsilon >0$} and $C(N)$ is given in (\ref{gqo}). Moreover, we have 
\begin{align}\label{quasilem11}
\sum_{k=l}^{k=l+{N}}\xi_{k}^2\le D(N) \xi^2(\cT_\ell), \quad\forall \ell,N\in\mathbb{N}
\end{align}
{where $\xi_{k}^2:=  \xi^2 (\cT_k)=\eta_{k}^2+\beta||u-u_{k}||^2_{\mathcal{X}}$ with $\eta_k=  \eta (\cT_k)$}.
\end{lemma}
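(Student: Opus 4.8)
The plan is to combine the one-step estimator reduction~\eqref{red1}, the reliability bound~\eqref{rel1}, and the generalized quasi-orthogonality~\eqref{gqo} into a single geometrically summable recursion, in the spirit of the linear-convergence machinery of~\cite{cascon2008quasi,carstensen2014axioms,feischl2022inf}. The first step is to upgrade~\eqref{red1} to a recursion for a combined quantity: squaring~\eqref{rel1} and applying Young's inequality gives, for each index $k$, $||u-u_k||_{\mathcal{X}}^2\le 2C_{rel}^2\,\eta(\cT_k)^2+2\rho_1(h_k)^2||u-u_k||_{\mathcal{X}}^2$, so that for any $\epsilon>0$ the quantity $\eta(\cT_k)^2+\epsilon\,||u-u_k||_{\mathcal{X}}^2$ is bounded by $(1+2\epsilon C_{rel}^2)\,\eta(\cT_k)^2+2\epsilon\rho_1(h_k)^2||u-u_k||_{\mathcal{X}}^2$. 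Inserting this (at index $k+1$) into~\eqref{red1}, and using that the meshes are nested so that $h_k\le h_\ell$ and $\rho_i(h_k)\le\rho_i(h_\ell)$ for $k\ge\ell$, one obtains, after moving the $\rho_1$-term to the left-hand side, a recursion
\[
\eta(\cT_{k+1})^2+c\,||u-u_{k+1}||_{\mathcal{X}}^2\le\widetilde{\kappa}\bigl(\eta(\cT_{k})^2+c\,||u-u_{k}||_{\mathcal{X}}^2\bigr)+(1+2\epsilon C_{rel}^2)C_{est}\,||u_{k+1}-u_k||_{\mathcal{X}}^2,
\]
with $\widetilde{\kappa}=\kappa(1+2\epsilon C_{rel}^2)$ and $c=\epsilon(1-2\rho_1(h_\ell)^2)$; fixing $\epsilon$ small enough keeps $\widetilde{\kappa}<1$.

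Next I would sum this recursion over $k=\ell,\dots,\ell+N-1$. The elementary fact that $a_{k+1}\le q\,a_k+b_k$ with $0\le q<1$ implies $\sum_{k=\ell}^{\ell+N}a_k\le(1-q)^{-1}\bigl(a_\ell+\sum_{k=\ell}^{\ell+N-1}b_k\bigr)$, so summing bounds $\sum_{k=\ell}^{\ell+N}(\eta(\cT_k)^2+c\,||u-u_k||_{\mathcal{X}}^2)$ by $\tfrac{1}{1-\widetilde{\kappa}}(\eta(\cT_\ell)^2+c\,||u-u_\ell||_{\mathcal{X}}^2)$ plus $\tfrac{(1+2\epsilon C_{rel}^2)C_{est}}{1-\widetilde{\kappa}}\sum_{k=\ell}^{\ell+N-1}||u_{k+1}-u_k||_{\mathcal{X}}^2$. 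Into the accumulated increment term I plug the generalized quasi-orthogonality~\eqref{gqo}, applied with $N$ replaced by $N-1$ so that exactly the $N$ consecutive increments between steps $\ell$ and $\ell+N$ are estimated; this bounds it by $C(N-1)||u-u_\ell||_{\mathcal{X}}^2+\rho_2(h_\ell)\sum_{k=\ell}^{\ell+N}||u-u_k||_{\mathcal{X}}^2$, with $C(\cdot)$ as in~\eqref{gqo}.

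It then remains to (i) convert the term $||u-u_\ell||_{\mathcal{X}}^2$ --- the one carrying the growing factor $C(N-1)$ --- back into $\eta(\cT_\ell)^2$ by one more use of~\eqref{rel1}, retaining only a residual $\beta||u-u_\ell||_{\mathcal{X}}^2$; and (ii) absorb the vanishing tail $\rho_2(h_\ell)\sum_{k=\ell}^{\ell+N}||u-u_k||_{\mathcal{X}}^2$ into the left-hand side, which is admissible since $h_\ell$ is small. Carrying the constants through this computation yields an inequality of the form~\eqref{quasilem1}, with the factor $D(N)$ in front of $\eta(\cT_\ell)^2$, the residual coefficient $\beta$ in front of $||u-u_\ell||_{\mathcal{X}}^2$ and of each left-hand $||u-u_k||_{\mathcal{X}}^2$, and $\widetilde{\kappa}$ as in the statement; by Remark~\ref{remark1} one has $D(N)=o(N)$. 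Finally,~\eqref{quasilem11} is immediate: since $D(N)\ge1$ and $\beta>0$ for $h_\ell$ small, the right-hand side of~\eqref{quasilem1} is at most $D(N)\bigl(\eta(\cT_\ell)^2+\beta||u-u_\ell||_{\mathcal{X}}^2\bigr)=D(N)\,\xi^2(\cT_\ell)$.

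I expect the principal difficulty to be bookkeeping rather than conceptual. Three distinct vanishing quantities --- $\rho_1$ from reliability, the remainder $\rho_2$ (equivalently $\rho_3$) from the quasi-orthogonality, and the factor lost in the absorption step --- must be controlled simultaneously, which forces $\epsilon$ to be small (so that $\widetilde{\kappa}<1$) and $h_\ell$ to be small (so that $\beta>0$ and the absorption is legitimate), while $\beta$ and $D(N)$ are allowed to depend on $\ell$ only through $h_\ell$. The monotonicity $\rho_i(h_k)\le\rho_i(h_\ell)$ along the refined sequence is exactly what permits a single value of the coefficient to work uniformly over $k\in\{\ell,\dots,\ell+N\}$, and a little care with the index shift in~\eqref{gqo} is needed so that precisely $N$ increments are controlled.
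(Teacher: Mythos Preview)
Your proposal is correct and follows essentially the same approach as the paper: both combine the reliability bound~\eqref{rel1} (squared, via Young) with the reduction~\eqref{red1} to obtain an estimator inequality with contraction factor $\widetilde{\kappa}=\kappa(1+2\epsilon C_{rel}^2)$, then sum over $k=\ell,\dots,\ell+N$, invoke the quasi-orthogonality~\eqref{gqo} (with $N-1$) on the accumulated increments, and use reliability once more on $||u-u_\ell||_{\mathcal X}^2$ to push the growing $C(N-1)$ factor onto $\eta(\cT_\ell)^2$ while absorbing the $\rho$-tails into the left-hand side. The only cosmetic difference is the order of operations: you first package everything into a one-step recursion $a_{k+1}\le\widetilde{\kappa}a_k+b_k$ and then sum, whereas the paper sums $\sum_{k=\ell+1}^{\ell+N}(\eta_k^2+\epsilon||u-u_k||_{\mathcal X}^2)$ directly and applies the ingredients termwise; the resulting constants $D(N)$, $\beta$, $\widetilde{\kappa}$ are the same.
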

\begin{proof}
Using the reduction property (\ref{red1}) implies  
\begin{align*}
\sum_{k=l+1}^{k=l+{N}}&(\eta(\cT_{k})^2+\epsilon||u-u_{k}||^2_{\mathcal{X}})\\
&\le (1+2\epsilon C_{rel}^2)\sum_{k=l+1}^{k=l+{N}}\eta(\cT_{k})^2+\sum_{k=l+1}^{k=l+{N}}{2\epsilon\rho_1(h_\ell)^2}||u-u_{k}||^2\\
&\le (1+2\epsilon C_{rel}^2)\Big(\kappa\sum_{k=l+1}^{k=l+{N}}\eta(\cT_{k-1})^2+C_{est} \sum_{k=l+1}^{k=l+{N}}||u_{{k}}-u_{{k-1}}||^2_{\mathcal{X}}\Big)\\
&\quad +\sum_{k=l+1}^{k=l+{N}}{2\epsilon\rho_1(h_\ell)^2}||u-u_{k}||^2{.}
\end{align*}
By the quasi orthogonality (\ref{gqo}), the following bound holds:
\begin{align*}
\sum_{k=l+1}^{k=l+{N}}&(\eta(\cT_{k})^2+\epsilon||u-u_{k}||^2_{\mathcal{X}})\\
&\le \widetilde{\kappa}\sum_{k=l+1}^{k=l+{N}}\eta(\cT_{k-1})^2+C_{est}(1+2\epsilon C_{rel}^2)C(N-1)||u-u_{{\ell}}||^2_{\mathcal{X}}\\
&\quad+ \rho_{2}(h_{\ell})\sum_{k=l+1}^{k=l+{N}}||u-u_{k-1}||^2_{\mathcal{X}}+ {2\epsilon\rho_1(h_\ell)^2}\sum_{k=l+1}^{k=l+{N}}||u-u_{k}||^2_{\mathcal{X}},
\end{align*}
where $\widetilde{\kappa}=\kappa(1+2\epsilon C_{rel}^2)<1$ for small enough $\epsilon >0$.
Applying the reliability bound (\ref{rel1}) gives
\begin{align*}
(1-\widetilde{\kappa})\sum_{k=l+1}^{k=l+{N}}&\eta(\cT_{k})^2+(\epsilon-{2\epsilon\rho_1(h_\ell)^2}-\rho_{2}(h_{\ell}))\sum_{k=l+1}^{k=l+{N}}||u-u_{k}||^2_{\mathcal{X}}\\
&\le (\widetilde{\kappa} +C_{est}(1+2\epsilon C_{rel}^2)C(N-1)2C_{rel}^2) \eta(\cT_{\ell})^2
+ \rho_{2}(h_\ell)||u-u_{\ell}||^2_{\mathcal{X}}.
\end{align*}
This completes the proof.
\end{proof}
\begin{lemma}\label{lem-lin12}
Let the sequence $\{\eta(\cT_\ell)\}_{\ell\in\mathbb{N}}\subset \mathbb{R}$ satisfies (\ref{quasilem1}). Then there exists ${N}_0\in\mathbb{N}$ such that
\begin{align*}
q_{\log}:=\log (D(N_0))-\sum_{j=1}^{N_0}D(j)^{-1}<0.
\end{align*}
If the estimator $\{\xi_\ell\}_{\ell\in\mathbb{N}}$ satisfies the following monotone condition 
\begin{align*}
\xi_{\ell+k}^2\le C_{mon} \xi^2_\ell,
\end{align*}
where 
\begin{align*}
 \xi^2_\ell =\eta^2(\cT_\ell)+\beta||u-u_{\ell}||^2_{\mathcal{X}},
\end{align*}
then $\{\xi_\ell\}_{\ell\in\mathbb{N}}$ also satisfies
\begin{align*}
\xi_{\ell+k}^2\le C q^k \xi^2_\ell
\end{align*}
with $q= exp(q_{\log}/N_0)<1$ and $C=C_{mon}exp(-q_{\log})$.
\end{lemma}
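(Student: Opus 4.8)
The plan is to treat the two claims separately: first the existence of a block length $N_0$ with $q_{\log}<0$, which follows from the sublinear growth of $D$, and then the geometric decay, which I would obtain from a backward telescoping of the summed quasi-orthogonality \eqref{quasilem11}, combined with the monotone hypothesis to handle indices that are not multiples of $N_0$.

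For the first claim I would use that $D$ grows sublinearly. By Remark~\ref{remark1} we have $C(N)\le C_3N^{1-2\delta}$, so the definition $D(N)=1+(\widetilde{\kappa}+C_{est}(1+2\epsilon C_{rel}^2)C(N-1)2C_{rel}^2)/(1-\widetilde{\kappa})$ gives $D(N)\le C_4N^{1-2\delta}$ for $N$ large. Hence $\log D(N_0)=O(\log N_0)$, while $\sum_{j=1}^{N_0}D(j)^{-1}\ge C_4^{-1}\sum_{j\le N_0}j^{2\delta-1}\ge c\,N_0^{2\delta}$ for $N_0$ large. Since a fixed positive power of $N_0$ eventually dominates $\log N_0$, we get $q_{\log}=\log D(N_0)-\sum_{j=1}^{N_0}D(j)^{-1}<0$ for every sufficiently large $N_0$; fix one such $N_0$. (Only $D(N)=o(N)$, equivalently $C(N)=o(N)$ from Proposition~\ref{prop:quasi}, is really used here.)

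Next I would establish the one-block contraction
\[
\xi_{\ell+N_0}^2\le\exp(q_{\log})\,\xi_\ell^2\qquad\text{for all }\ell .
\]
Fix $\ell$, set $m=\ell+N_0$, and for $i=0,1,\dots,N_0$ put $R_i:=\sum_{k=m-i}^{m}\xi_k^2$; then $R_0=\xi_m^2$, $R_i-R_{i-1}=\xi_{m-i}^2\ge 0$, and $R_{N_0}=\sum_{k=\ell}^{\ell+N_0}\xi_k^2\le D(N_0)\xi_\ell^2$ by \eqref{quasilem11} (which follows from the hypothesis \eqref{quasilem1} since $D(N)\ge 1$). Applying \eqref{quasilem11} with base index $m-i$ and length $i$ gives $R_i\le D(i)\,\xi_{m-i}^2=D(i)(R_i-R_{i-1})$, and since the numerator in the definition of $D$ is bounded below by $\widetilde{\kappa}>0$ we have $D(i)>1$ for $i\ge 1$, so this rearranges to $R_{i-1}\le(1-1/D(i))\,R_i$. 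Telescoping over $i=1,\dots,N_0$ and using $1-x\le\exp(-x)$,
\begin{align*}
\xi_{\ell+N_0}^2=R_0\le\prod_{i=1}^{N_0}\Bigl(1-\frac{1}{D(i)}\Bigr)R_{N_0}\le D(N_0)\exp\Bigl(-\sum_{i=1}^{N_0}\frac{1}{D(i)}\Bigr)\xi_\ell^2=\exp(q_{\log})\,\xi_\ell^2 .
\end{align*}
Note this step uses only \eqref{quasilem11}, not the monotone condition.

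Finally, put $q=\exp(q_{\log}/N_0)$, which is $<1$ since $q_{\log}<0$. Iterating the one-block contraction gives $\xi_{\ell+mN_0}^2\le\exp(m\,q_{\log})\,\xi_\ell^2=q^{mN_0}\xi_\ell^2$ for all $m\ge 0$. For arbitrary $k\ge 0$, write $k=mN_0+r$ with $m=\lfloor k/N_0\rfloor$ and $0\le r<N_0$; using the monotone condition with base index $\ell+mN_0$,
\begin{align*}
\xi_{\ell+k}^2\le C_{mon}\,\xi_{\ell+mN_0}^2\le C_{mon}\,q^{mN_0}\xi_\ell^2=C_{mon}\,q^{k-r}\xi_\ell^2\le C_{mon}\,q^{-N_0}\,q^{k}\,\xi_\ell^2 ,
\end{align*}
where the last inequality uses $0<q<1$ and $r<N_0$. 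Since $q^{-N_0}=\exp(-q_{\log})$, this is exactly $\xi_{\ell+k}^2\le C q^{k}\xi_\ell^2$ with $C=C_{mon}\exp(-q_{\log})$. The whole computation is short; the one genuinely delicate point is the first claim, and it hinges on the summed quasi-orthogonality constant $D(N)$ — hence $C(N)$ from Proposition~\ref{prop:quasi} — being $o(N)$: if $D(N)\asymp N$, then $\sum_{j\le N_0}D(j)^{-1}\asymp\log N_0$ would merely balance $\log D(N_0)$ and $q_{\log}<0$ could fail.
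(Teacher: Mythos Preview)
Your proof is correct and follows essentially the same route as the paper (which in turn defers to \cite[Lemma~6]{feischl2022inf}): the paper proves $\xi_{\ell+k}^2\le\bigl(\prod_{j=1}^{k}(1-D(j)^{-1})\bigr)\sum_{j=\ell}^{\ell+k}\xi_j^2$ by induction on $k$ and then applies \eqref{quasilem11}, whereas you obtain the identical product bound for $k=N_0$ by the equivalent backward telescoping $R_{i-1}\le(1-1/D(i))R_i$, then iterate and use quasi-monotonicity exactly as the paper does. Your explicit verification that $q_{\log}<0$ follows from the sublinear growth of $D$ (via Remark~\ref{remark1}) is more detailed than the paper, which leaves this to the cited reference.
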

\begin{proof}
Proof is identical  to \cite[Lemma 6]{feischl2022inf}.  Specifically, our aim is to prove the following estimate
\begin{align}\label{est11}
\xi^2_{\ell+k}\le \left(\Pi_{j=1}^{k}(1-D(j)^{-1})\right)\sum_{j=\ell}^{\ell+k}\xi_{j}^2,\quad \forall k,\ell\in\mathbb{N}.
\end{align}
where 
\begin{align*}
  \xi^2_\ell=\eta^2(\cT_\ell)+\beta||u-u_{\ell}||^2_{\mathcal{X}}.
\end{align*}
The mathematical induction is used to prove the above estimate. For $k=0$, it holds.  Next, we assume that it is true for $k=n$ and prove for $k=n+1$.
\begin{align}\label{est12}
\xi^2_{\ell+k+1}&\le \left(\Pi_{j=1}^{k}(1-D(j)^{-1})\right)\sum_{j=\ell+1}^{\ell+k+1}\xi_{j}^2,\nonumber\\
&\le \left(\Pi_{j=1}^{k}(1-D(j)^{-1})\right)\sum_{j=\ell}^{\ell+k+1}\xi_{j}^2-\xi_{\ell}^2.
\end{align}
Using  (\ref{quasilem11}) gives the estimate (\ref{est11}). Applying (\ref{quasilem11}) implies that
\begin{align}\label{est13}
\xi^2_{\ell+k+1}&\le \left(D(k)\Pi_{j=1}^{k}(1-D(j)^{-1})\right)\xi_{\ell}^2.
\end{align}
Using quasi-monotonicity of $\xi_\ell$ gives
\begin{align*}
\xi_{\ell+k}^2\le \xi_{\ell+aN_0+b}^2&\le q_0^a \xi_{\ell+b}^2%
\le C_{mon}q_0^a \xi_{\ell+b}^2%
\le \frac{C_{mon}q_0^{k/N_0}} {q}\xi_{\ell+b}^2,\\%
\end{align*}
where $k=aN_0+b$ with $a,b\in\mathbb{N}$, $b<N_0$ and $a>k/N_0-1$.
\end{proof}

We are now ready to collect all results and to prove Theorem~\ref{optthm}.

\begin{proof}[Proof of Theorem~\ref{optthm}]
The reliability result, as presented in Lemma~\ref{lem-lin11}, is a direct consequence of the discrete reliability equation~\eqref{eq:drel} and the mesh density. Additionally, the reduction in the estimator outlined in Lemma~\ref{lem-lin11} is derived from the stability exhibited on non-refined elements~\eqref{eq:stabnonref}, coupled with the reduction property on refined elements~\eqref{eq:redprop}. Lastly, the quasi-monotonicity condition specified in Lemma~\ref{lem-lin12} is established through the stability on non-refined elements~\eqref{eq:stabnonref}, the reduction property on refined elements~\eqref{eq:redprop}, and the discrete reliability equation~\eqref{eq:drel}.

Utilizing Remark~\ref{remark1} in conjunction with Lemmas~\ref{lem-lin11} through~\ref{lem-lin12} leads to the conclusion that 
\[
\xi_{\ell+k}^2\le Cq^k \xi_{\ell}^2, \quad \forall \ell,k\in\mathbb{N},
\]
where $q\in(0,1)$ and $C>0$. It's worth noting that all pivotal components referenced in~\cite[Lemma 4.12]{carstensen2014axioms} 
have been rigorously demonstrated to affirm the optimality of D\"{o}rfler marking.
 Consequently, with reference to~\cite[Lemma 4.14]{carstensen2014axioms} and~\cite[Lemma 4.15]{carstensen2014axioms}, the rate optimality of \textbf{Algorithm1} is confirmed.
\end{proof}
\section{Numerical results}\label{sec5}
This section is devoted to discuss the computational experiments which demonstrate 
 the efficacy of the proposed adaptive FEM. All results are computed using FENICS \cite{mardal_book12,AlnaesEtal2015}. The following standard adaptive loop discussed in \textbf{Algorithm 1}
is used to compute the numerical experiments:
\[\mbox{SOLVE}\rightarrow\mbox{ESTIMATE}\rightarrow\mbox{MARK}\rightarrow\mbox{REFINE}\]
To solve the discrete system at each refinement level $\ell$, we employ the SLEPcEigenSolver. D\"{o}fler marking, guided by the criterion
\[\theta\sum_{\mathcal{T}_\ell} \eta_\ell^2 \le \sum_{\mathcal{M}_\ell} \eta_\ell^2,\]
 is used to strategically select elements for refinement, where $\theta\in (0,1)$. The computation of discrete eigenvalues is achieved using Taylor-Hood $P2-P1$ elements.

\subsection{Lshaped-domain}
In this instance, we tackle the Stokes eigenvalue problem (\ref{stokeseig}) within the confines of an L-shaped domain, denoted as $\Omega=(-1,1)^2\setminus [0,1]^2$. The benchmark value for the primary eigenvalue is $\lambda= 32.13269465$, as stipulated in \cite{gedicke2020divergence}.
The error profile for the first eigenvalue is presented in Figure \ref{Lshaped-1}. Notably, the decay of the error, expressed as $\sqrt{|\lambda-\lambda_\ell|}$ between the discrete eigenvalue and the referenced first eigenvalue, follows a rate of approximately $0.27$ with a uniformly refined mesh. Figure \ref{Lshaped-1} illustrates that the slope of the eigenvalue error aligns with that of the estimator $\eta_\ell$, indicating the reliability and efficiency of our estimator. Simultaneously, employing an adaptively refined mesh results in an error decay rate of about $1$ for $\sqrt{|\lambda-\lambda_\ell|}$.
The velocity and pressure profiles are depicted in Figure \ref{Lshaped-1v1}, while Figure \ref{Lshaped-1m1} showcases a sequence of adaptively refined meshes at different levels. These mesh illustrations reveal substantial refinements near the singularity, validating the numerical efficacy of our adaptive scheme.

\begin{figure}[htbp]
\begin{center}
\includegraphics[width=10.5cm,height=7.5cm]{./Lshaped-Ist_n}
\caption{Convergence results for Lshaped-domain}
\label{Lshaped-1}
\end{center}
\end{figure}

\begin{figure}[htbp]
\begin{center}
\subfigure[]{
\includegraphics[width=4.5cm,height=4.5cm]{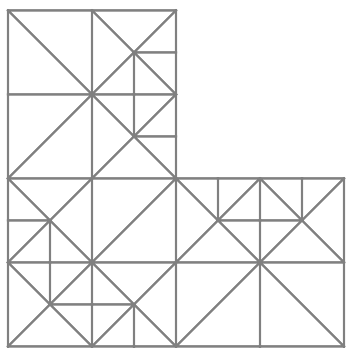}
\label{lshaped-m1}
}
\subfigure[]{
\includegraphics[width=4.5cm,height=4.5cm]{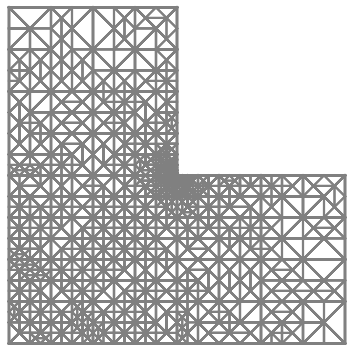}
\label{lshaped-m2}
}
\subfigure[]{
\includegraphics[width=4.5cm,height=4.5cm]{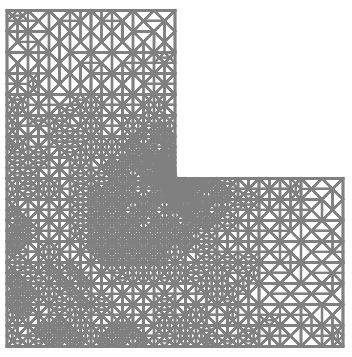}
\label{lshaped-m3}
}
\subfigure[]{
\includegraphics[width=4.5cm,height=4.5cm]{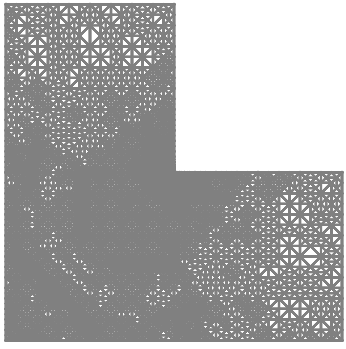}
\label{lshaped-m4}
}
\subfigure[]{
\includegraphics[width=4.5cm,height=4.5cm]{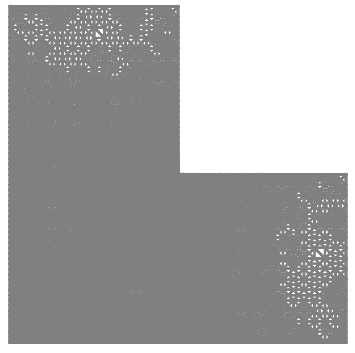}
\label{lshaped-m5}
}
\caption{Adaptively refined meshes for Lshaped-domain with \subref{lshaped-m1} 151, DOF, \subref{lshaped-m2} 5571 DOF, \subref{lshaped-m3} 11836DOF, \subref{lshaped-m4} 26286 DOF, and \subref{lshaped-m5} 58291 DOF.}
\label{Lshaped-1m1}
\end{center}
\end{figure}
\begin{figure}[htbp]
\begin{center}
\subfigure[]{
\includegraphics[width=5.5cm,height=4.5cm]{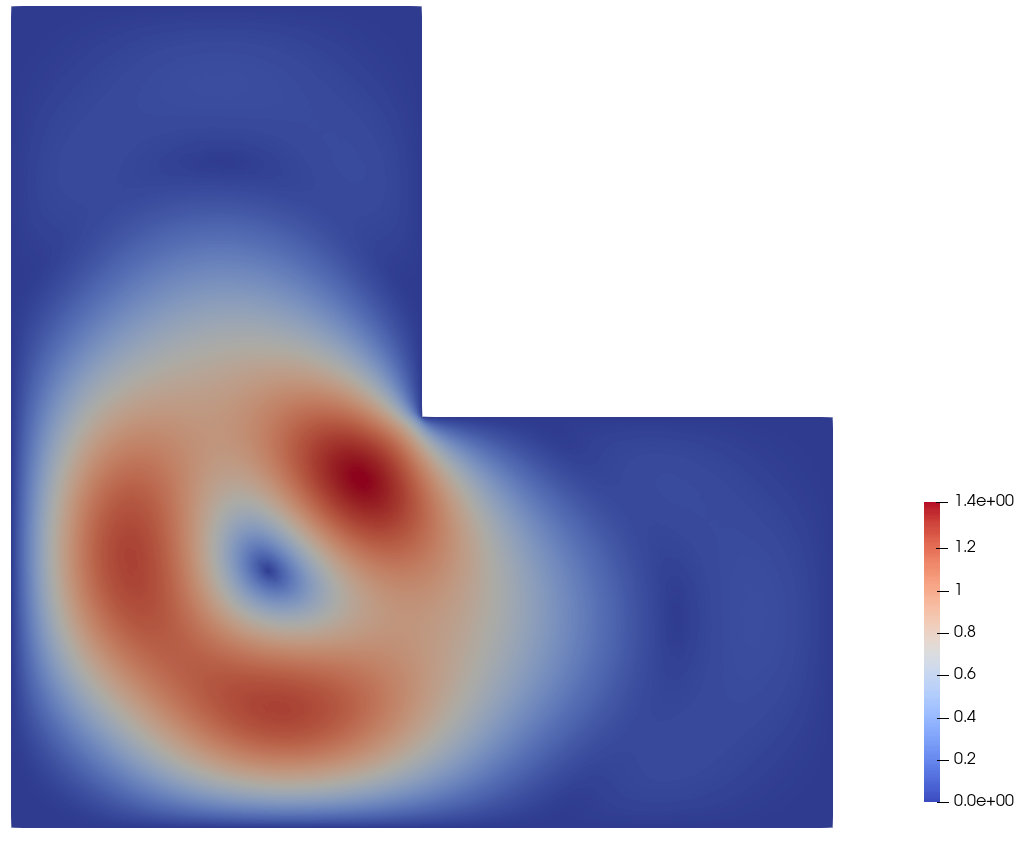}
\label{lshaped-vel}
}
\subfigure[]{
\includegraphics[width=5.5cm,height=4.5cm]{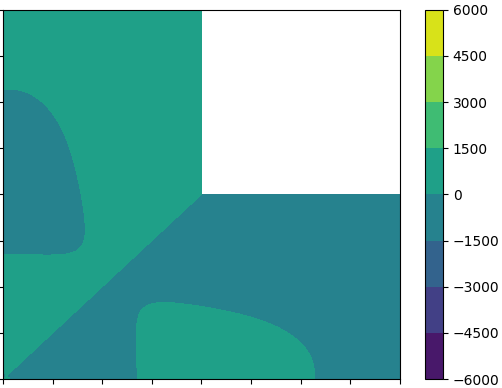}
\label{lshaped-pre}
}
\caption{\subref{lshaped-vel} Velocity profile, \subref{lshaped-pre} Pressure profile,}
\label{Lshaped-1v1}
\end{center}
\end{figure}
\subsection{Slit-domain}
In this illustration, we address the resolution of the Stokes eigenvalue problem (\ref{stokeseig}) within the Slit-domain, denoted as $\Omega=(0,1)^2\setminus [0,1]$. The established benchmark for the primary eigenvalue is $\lambda= 29.9168629$, as documented in \cite{gedicke2020divergence}.
The error profile for the first eigenvalue is depicted in Figure \ref{slit}. Notably, the decay of the error, expressed as $\sqrt{|\lambda-\lambda_\ell|}$ between the discrete eigenvalue and the referenced first eigenvalue, follows a rate of approximately $0.25$ with a uniformly refined mesh. Figure \ref{slit} illustrates that the slope of the eigenvalue error aligns with that of the estimator $\eta_\ell$, signifying the reliability and efficiency of our estimator. Simultaneously, utilizing an adaptively refined mesh results in an error decay rate of about $1$ for $\sqrt{|\lambda-\lambda_\ell|}$.
The velocity and pressure profiles are showcased in Figure \ref{slit-v1}, while Figure \ref{slit1m1} displays a sequence of adaptively refined meshes at different levels. These mesh depictions reveal substantial refinements near the singularity, confirming the anticipated effectiveness of our adaptive scheme.

\begin{figure}[htbp]
\begin{center}
\includegraphics[width=10.5cm,height=7.5cm]{./Slit-ist}
\caption{Convergence results for Slit-domain}
\label{slit}
\end{center}
\end{figure}
\begin{figure}[htbp]
\begin{center}
\subfigure[]{
\includegraphics[width=4.5cm,height=4.5cm]{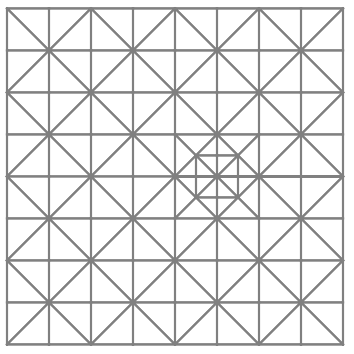}
\label{slit-m1}
}
\subfigure[]{
\includegraphics[width=4.5cm,height=4.5cm]{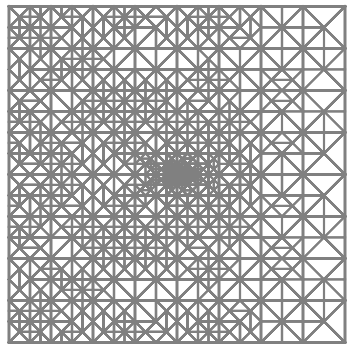}
\label{slit-m2}
}
\subfigure[]{
\includegraphics[width=4.5cm,height=4.5cm]{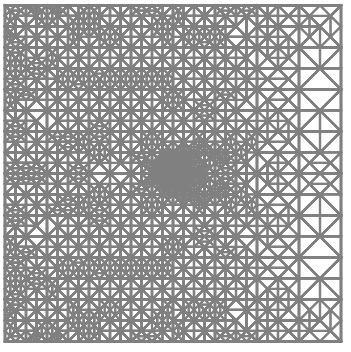}
\label{slit-m3}
}
\subfigure[]{
\includegraphics[width=4.5cm,height=4.5cm]{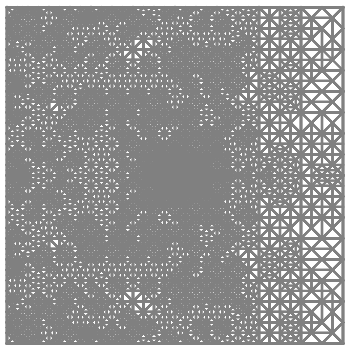}
\label{slit-m4}
}
\caption{Adaptively refined meshes for Slit-domain with \subref{slit-m1} 679, DOF, \subref{slit-m2} 6047 DOF, \subref{slit-m3} 12636 DOF, and \subref{slit-m4} 26718 DOF.}
\label{slit1m1}
\end{center}
\end{figure}
\begin{figure}[htbp]
\begin{center}
\subfigure[]{
\includegraphics[width=5.5cm,height=4.5cm]{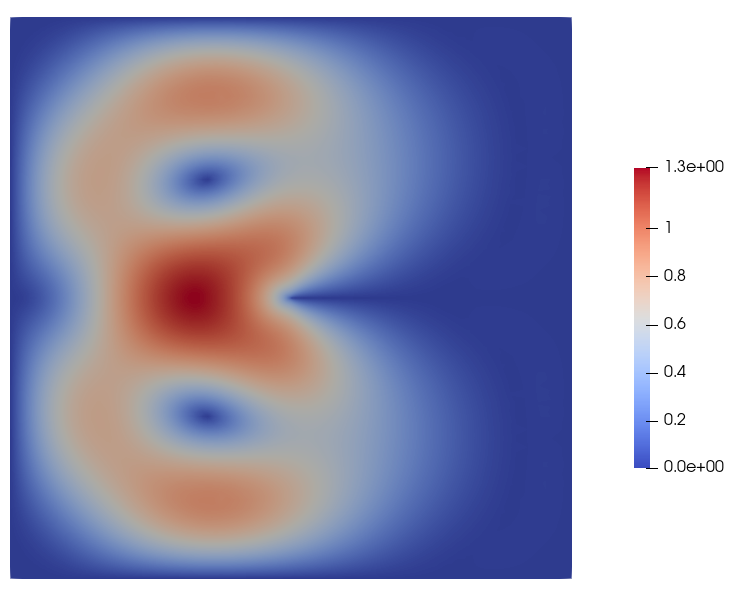}
\label{slit-vel}
}
\subfigure[]{
\includegraphics[width=5.5cm,height=4.5cm]{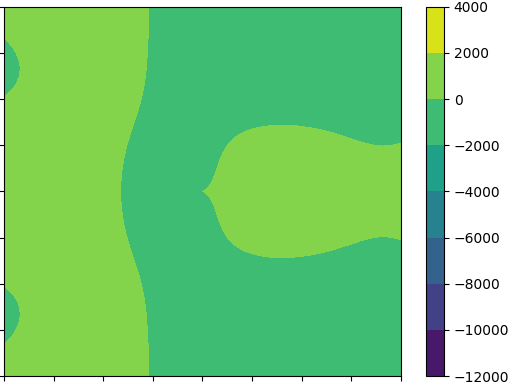}
\label{slit-pre}
}
\caption{\subref{slit-vel} Velocity profile, \subref{slit-pre} Pressure profile.}
\label{slit-v1}
\end{center}
\end{figure}

\subsection{The Fichera corner}
In this example, we address the Stokes eigenvalue problem (\ref{stokeseig}) in the context of the Fichera corner, represented as $\Omega= \Omega_1\setminus \bar{\Omega}_2$, where $\Omega_1= (0,0.8)\times(0.0,1)\times(0.0,1.2)$ and  $\Omega_2= (0.0,0.4)\times (0.0,0.5)\times (0.0,0.6)$. For this particular example, we designate $\theta$ as $0.1$. The graphical representation in Figure \ref{3dl-error} demonstrates that the estimator $\eta_\ell$ exhibits a decay rate of approximately $2/3$ when utilizing adaptive meshes.
Figure \ref{3dl-v} presents the velocity and pressure profiles, while Figure \ref{3dl-v}\subref{3dl-vel11} showcases the adaptively refined mesh containing $1009839$ degrees of freedom (DOFs). These mesh visualizations highlight significant refinements in proximity to the singularity, affirming the numerical effectiveness of our adaptive scheme.
\begin{figure}[htbp]
\begin{center}
\includegraphics[width=10.5cm,height=7.5cm]{./3DlshapeFT_err}
\caption{Convergence results for the Fichera corner}
\label{3dl-error}
\end{center}
\end{figure}
\begin{figure}[htbp]
\begin{center}
\subfigure[]{
\includegraphics[width=6cm,height=5.5cm]{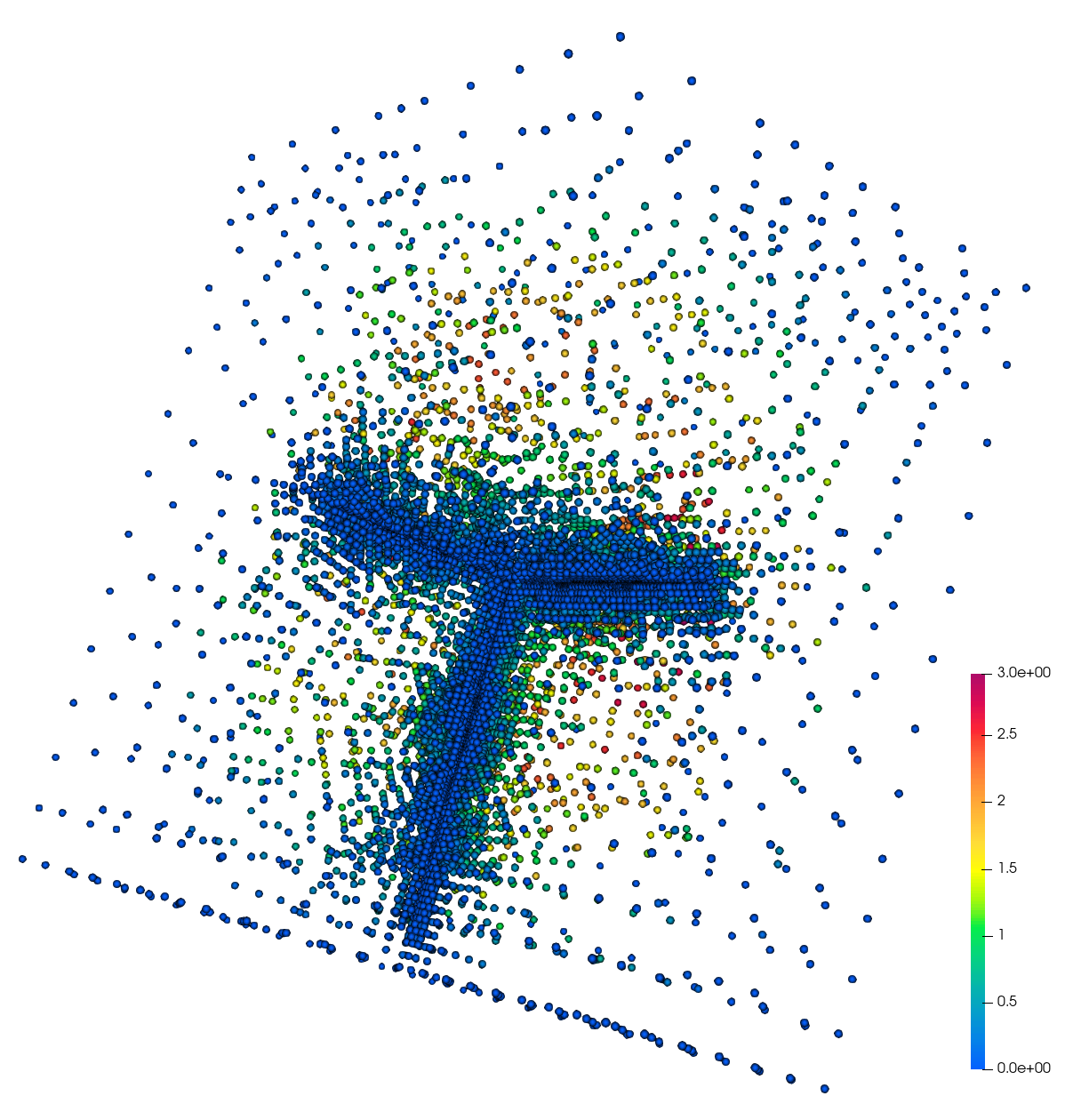}
\label{3dl-vel}
}
\subfigure[]{
\includegraphics[width=6cm,height=5.5cm]{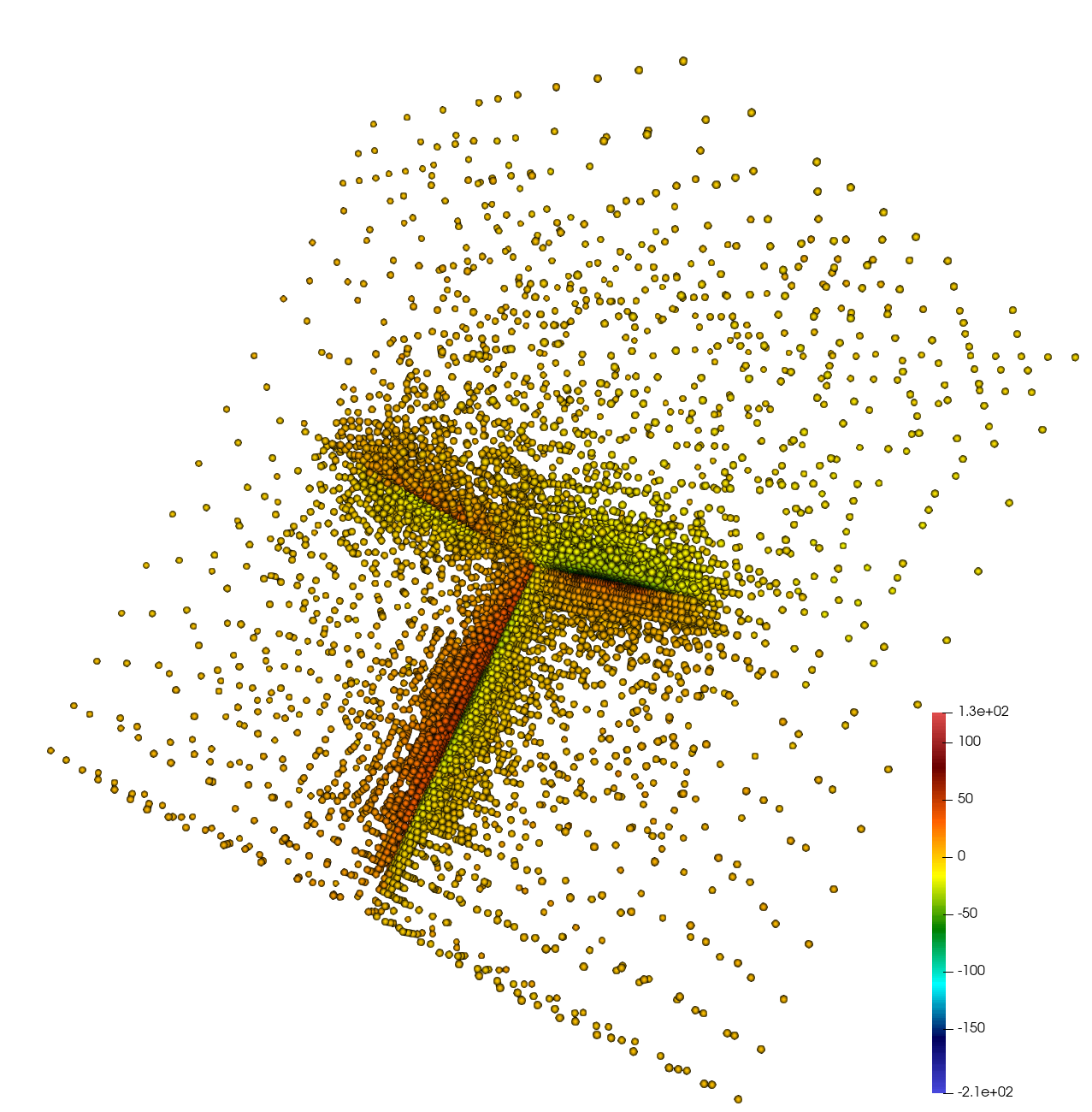}
\label{3dl-pre}
}
\subfigure[]{
\includegraphics[width=7cm,height=6cm]{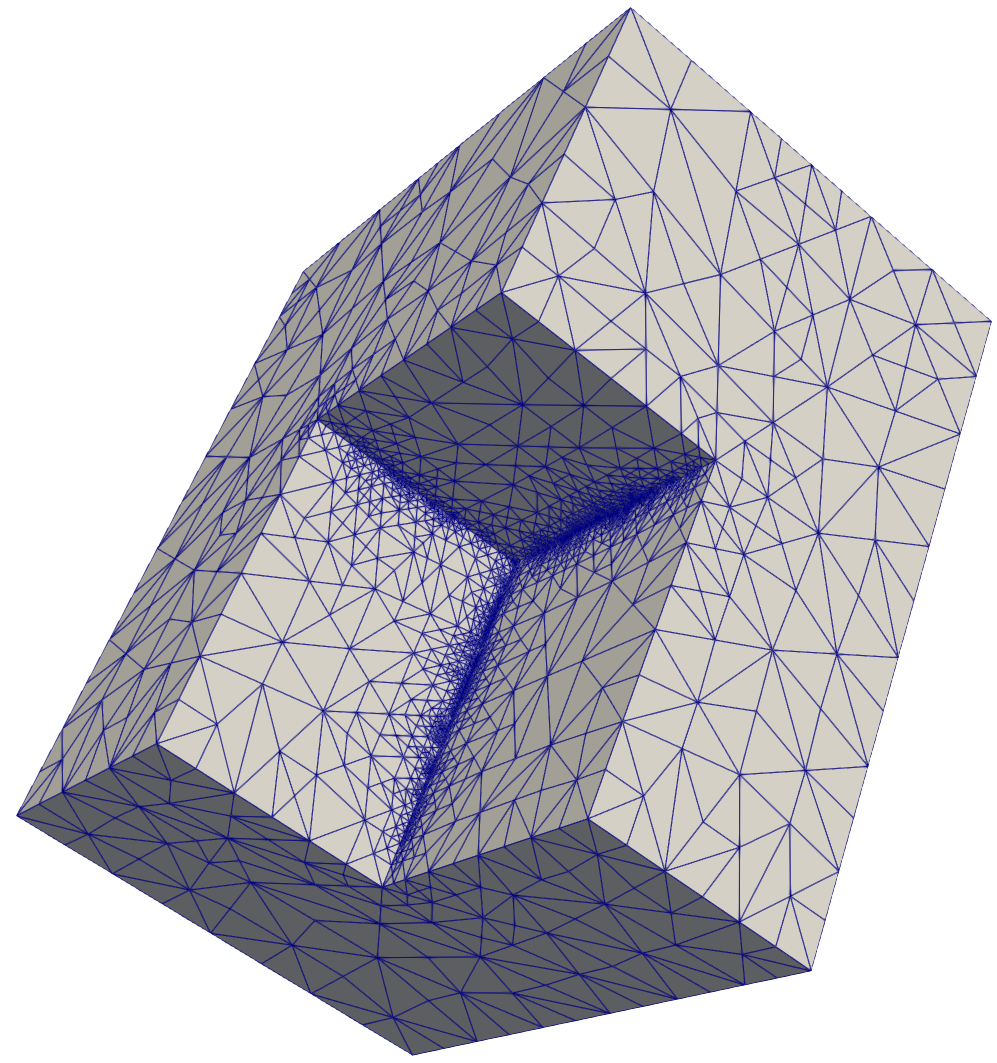}
\label{3dl-vel11}
}
\caption{\subref{3dl-vel} Velocity profile, \subref{3dl-pre} Pressure profile, \subref{3dl-vel11} Adaptively refined meshes for the Fichera corner with 1009839 DOFs.}
\label{3dl-v}
\end{center}
\end{figure}

\section*{Conclusion}
This paper addressed the application of adaptive mixed FEM to Stokes eigenvalue problems. We achieved the following key results:
\begin{itemize}
\item{\textbf{Rate optimality:}} Established the optimal convergence rate for  \textbf{Algorithm 1}, demonstrating its efficiency in approximating eigenvalues.
\item{\textbf{Novel estimator:}} Proposed a new estimator specifically tailored for Stokes eigenvalue problems. This estimator exhibits local equivalence to the established work of Lovadina et al. \cite{lovadina2009posteriori}.
\item{\textbf{Theoretical rigor:}} Provided rigorous proofs for quasi-orthogonality and discrete reliability, solidifying the theoretical foundation of our approach.
\item{\textbf{Numerical validation:}} Confirmed the optimality result through computational experiments, showcasing the practical effectiveness of our method.
\end{itemize}
 These findings contribute significantly to the field of adaptive FEM for eigenvalue problems, offering a robust and efficient framework for tackling Stokes eigenvalue analysis.
  \bibliographystyle{amsplain} 
\bibliography{BibFile-stokeseigen}
\end{document}